\numberwithin{equation}{section}
\def\e{\epsilon}
\def\p{\partial}
\def\o{\overline}
\def\b{\bar}
\def\mb{\mathbb}
\def\mc{\mathcal}
\def\n{\nabla}
\newtheorem{thm}{Theorem}[section]
\newtheorem{lemma}[thm]{Lemma}
\newtheorem{prop}[thm]{Proposition}
\newtheorem{cor}[thm]{Corollary}
\theoremstyle{definition}
\newtheorem{rem}[thm]{Remark}
\theoremstyle{definition}
\newtheorem{defn}[thm]{Definition}
\newcommand{\comment}[1]{}
\begin{document}

\title{Positivity preserving along a flow over projective bundle}

\author{Xueyuan Wan}

\address{Mathematical Science Research Center, Chongqing University of Technology, Chongqing 400054, China}
\email{xwan@cqut.edu.cn}

\begin{abstract}
In this paper, we introduce a flow over the projective bundle $p:P(E^*)\to M$, which is a natural generalization of both Hermitian-Yang-Mills flow and  K\"ahler-Ricci flow. We prove that the semipositivity of curvature of the hyperplane line bundle $\mc{O}_{P(E^*)}(1)$ is preserved along this flow under the null eigenvector assumption. As applications, we prove that the semipositivity is preserved along the  flow if the base manifold $M$ is a curve, which implies that the Griffiths semipositivity is preserved along the Hermitian-Yang-Mills flow over a curve. And we also reprove that the nonnegativity of holomorphic bisectional curvature is preserved under K\"ahler-Ricci flow.
 \end{abstract}
\maketitle
\tableofcontents

%%%%%%%%%%%%%%%%%%%%%%%%%%%%%%%%%%%%%%%%%%%
\section*{Introduction}

In the celebrated paper \cite{Siu}, Siu and Yau presented a differential geometric proof of the
famous Frankel conjecture in K\"ahler geometry, which states that a compact K\"ahler manifold $M^n$ with positive holomorphic bisectional curvature must be biholomorphic to a complex projective space $\mb{P}^n$. For a compact K\"ahler manifold with nonnegative holomorphic bisectional curvature, Mok \cite{Mok} proved a generalized Frankel conjecture and obtained a uniformization theorem, and H. Gu \cite{Gu} gave a new proof of Mok's uniformization theorem. They both used the K\"ahler-Ricci flow and considered the variation of holomorphic bisectional curvature along this flow. Especially, the nonnegativity of holomorphic bisectional curvature is preserved under K\"ahler-Ricci flow \cite[Proposition 1.1]{Mok}.
    Later on, there are many references about studying and generalizing the Frankel conjecture by using K\"ahler-Ricci flow, include \cite{Chen, Chen1, Gu1, Phong}.

In 1979, Mori \cite{Mori} proved the famous Hartshorne's conjecture. In case the ground field is $\mb{C}$, a compact complex manifold $M$ was proved to biholomorphic to $\mb{P}^n$ if its tangent bundle is ample, which implies the Frankel conjecture. A holomorphic vector bundle $E$ is ample in the sense of Hartshorne if and only if the hyperplane line bundle $\mc{O}_{P(E^*)}(1)$ is a positive line bundle over $P(E^*)$ (see \cite[Proposition 3.2]{Har}), i.e., there is a positive curvature metric on $\mc{O}_{P(E^*)}(1)$. If $(E, h)$ is a Hermitian vector bundle with Griffiths positive curvature (see Definition \ref{defn2}), then $E$ is an ample vector bundle. In \cite{Gri}, Griffiths conjectured its converse also holds, namely $E$ can admit a Hermitian metric with Griffiths positive curvature if $E$ is ample. Both Mori's theorem and Griffiths conjecture would be proved if one can deform the given positive curvature metric to another ``better'' metric with positive curvature, for example, the K\"ahler metric with positive holomorphic bisectional curvature for Mori's theorem, and Hermitian metric with Girffiths positive curvature for Griffiths conjecture. Naturally, one wants to define a certain flow over the projective bundle $P(E^*)$ such that the positivity of the curvature  of $\mc{O}_{P(E^*)}(1)$ is preserved under this flow. This is also the motivation for the author to study the positivity preserving along the flow (\ref{flow0}) over projective bundle. 

Let $M$ be a compact complex manifold of dimension $n$, and $\pi: E\to M$ be a holomorphic vector bundle of rank $r$ over $M$. Let $E^*$ denote the dual bundle of $E$, and $p: P(E^*):=(E^*)^o/\mb{C}^*\to M$ denote the projective bundle, where $(E^*)^o$ denotes the set of all the nonzero elements in $E^*$. For any strongly pseudoconvex complex Finsler metric $G$ on $E^*$ (see Definition \ref{defn1}), there exists the following canonical decomposition (see Remark \ref{rem1} (4))
\begin{align*}
\sqrt{-1}\p\b{\p}\log G=-\Psi+\omega_{FS}(G),	
\end{align*}
which is a curvature form of $\mc{O}_{P(E^*)}(1)$ and represents the first Chern class $2\pi c_1(\mc{O}_{P(E^*)}(1))$.
Here $\Psi$ is called the Kobayashi curvature (see \cite[Definition 1.2]{FLW}), which is given in (\ref{Psi}), and $\omega_{FS}(G)$ is a positive $(1,1)$-form along each fiber of $p:P(E^*)\to M$, which is defined by (\ref{vertical form}). According to this decomposition, Kobayashi \cite{Ko1} gave a characterization of ample vector bundle, i.e., $E$ is ample if and only if there exists a strongly pseudoconvex complex Finsler metric on $E^*$ such that $\Psi<0$. 

Let $\omega(G)=p^*\omega$, where 
$
\omega=\sqrt{-1}g_{\alpha\b{\beta}}(G)dz^{\alpha}\wedge d\b{z}^{\beta} 
$
is a K\"ahler metric on $M$ depending smoothly on a Finsler metric $G$. 
One can define a Hermitian metric on $P(E^*)$ by
\begin{align*}
\Omega:=\omega(G)+\omega_{FS}(G),	
\end{align*}
Let $G_0$ be a strongly pseudoconvex complex Finsler metric on $E^*$, one has $\omega_{FS}(G_0)>0$ (means positive along fibers). Now we consider the following flow over the projective bundle $P(E^*)$: 
\begin{align}\label{flow0}
\begin{cases}
&\frac{\p}{\p t}\log G=\Delta_{\Omega}\log G,\\
& \omega_{FS} (G)>0,\\
&G(0)=G_0.	
\end{cases}
\end{align}
Here $\Delta_{\Omega}:=\sqrt{-1}\Lambda \p\b{\p}$, $\Lambda$ is the adjoint operator of $\Omega\wedge \bullet$. 

One can also define a horizontal and real $(1,1)$-form $T$ on $P(E^*)$ as follow,
\begin{align}\label{defnT}
(-\sqrt{-1})T(u,\o{u}):=\langle \sqrt{-1}R^g(u,\o{u}), -\Psi\rangle_{\Omega}-\left|i_u\p^V\Psi\right|^2_{\Omega}	
\end{align}
for any horizontal vector $u=u^\alpha\frac{\delta}{\delta z^{\alpha}}$, where $\langle \sqrt{-1}R^g(u,\o{u}), -\Psi\rangle_{\Omega}:=(-\Psi)_{\alpha\b{\delta}}g^{\alpha\b{\beta}}g^{\gamma\b{\delta}}R^g_{\gamma\b{\beta}\sigma\b{\tau}}u^\sigma\b{u}^\tau$ and $\left|i_u\p^V\Psi\right|^2_{\Omega}:=(\log G)^{a\b{b}}\p_a\Psi_{\alpha\b{\beta}}\o{\p_b\Psi_{\gamma\b{\tau}}}u^\alpha\o{u}^{\gamma}g^{\tau\b{\beta}}$, $R^g$ denotes the Chern curvature of $\omega$. 
Now we assume that $T$ satisfies the {\it the null eigenvector assumption} (see Theorem \ref{thm1}), namely $(-\sqrt{-1})T(U,\o{U})\geq 0$ whenever $\p\b{\p}\log G(U,\o{U})\geq 0$ and $i_U (\p\b{\p}\log G)=0$ for a $(1,0)$-type vector $U$ of $TP(E^*)$.
By using the maximum principle for real $(1,1)$-forms, we obtain
\begin{thm}[=Theorem \ref{thm2}]\label{thm0}
Let $\pi: (E^*, G_0)\to M$ be a  holomorphic Finsler vector bundle over the compact K\"ahler manifold $M$ with $\sqrt{-1}\p\b{\p}\log G_0\geq 0$.
If the horizontal $(1,1)$-form $T$ satisfies the null eigenvector assumption, then 
$
\sqrt{-1}\p\b{\p}\log G(t)\geq 0
$ along the flow (\ref{flow0}) for all $t\geq 0$ such that the solution exists.
\end{thm}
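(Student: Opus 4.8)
The plan is to run a tensor-maximum-principle argument for the evolving real $(1,1)$-form $\eta(t):=\sqrt{-1}\p\b{\p}\log G(t)$ on the compact manifold $P(E^*)$, treating $\eta$ as a Hermitian-type endomorphism with respect to the Hermitian metric $\Omega=\omega(G)+\omega_{FS}$. First I would commute the fiberwise Laplacian through $\p\b\p$: differentiating the first equation of (\ref{flow0}) gives $\frac{\p}{\p t}\eta = \p\b\p\big(\Delta_\Omega \log G\big)$, and using the Bochner–Kodaira / Weitzenböck formula together with the decomposition $\sqrt{-1}\p\b{\p}\log G=-\Psi+\omega_{FS}$ one rewrites the right-hand side as $\Delta_\Omega \eta$ plus zeroth-order curvature terms. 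The point of the specific definition (\ref{defnT}) of $T$ — assembled from $\langle R^g(u,\o u),-\Psi\rangle_\Omega$ and the fiber-derivative term $|i_u\p^V\Psi|^2_\Omega$ — is precisely that these lower-order terms in the evolution of $\eta$ reduce, at a point and in a direction where $\eta$ degenerates, to $(-\sqrt{-1})T(U,\o U)$ up to manifestly nonnegative quantities. So the heat operator acting on $\eta$ looks like $\big(\frac{\p}{\p t}-\Delta_\Omega\big)\eta = T + (\text{terms that vanish or are nonnegative on the null cone of }\eta)$.

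Next I would set up the maximum principle itself. Suppose for contradiction that semipositivity is lost; let $t_0>0$ be the first time at which $\eta(t_0)$ acquires a null eigenvector, i.e.\ there is a point $x_0\in P(E^*)$ and a $(1,0)$-vector $U$ with $\eta(t_0)\geq 0$, $i_U\eta(t_0)=0$. Contract the evolution inequality with $U\otimes\o U$ at $(x_0,t_0)$. On the left side, the time-derivative term is $\leq 0$ (the function $t\mapsto \eta(t)(U,\o U)$ has just reached its minimum value $0$ from above), while the spatial second-order term $-\Delta_\Omega\eta$ contracted with $U\otimes\o U$ is $\geq 0$ because $x_0$ is a spatial minimum of the nonnegative function $\eta(\cdot)(U,\o U)$ along the fiber directions in which $\Delta_\Omega$ differentiates, after extending $U$ appropriately (constant in the relevant frame). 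On the right side, the "good" terms vanish since $i_U\eta=0$, and what remains is $(-\sqrt{-1})T(U,\o U)$, which is $\geq 0$ by the null eigenvector assumption. This forces $\frac{\p}{\p t}\eta(U,\o U)\big|_{(x_0,t_0)} \geq 0$; combined with a strict-maximum-principle refinement (or a standard $\epsilon$-perturbation $\eta+\epsilon e^{Ct}\Omega$ argument to upgrade the non-strict inequality) one concludes that $\eta$ cannot in fact drop below zero, and $\eta(t)\geq 0$ persists for all $t$ in the interval of existence.

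A few technical points need care. The form $\eta$ lives on $P(E^*)$, which is compact, so the maximum is attained; but the operator $\Delta_\Omega$ involves only $\sqrt{-1}\Lambda\p\b\p$ with $\Lambda$ taken with respect to $\Omega$, and $\Omega$ itself evolves with $G$, so the "heat equation" for $\eta$ has $t$-dependent coefficients — this is harmless for the maximum principle but must be bookkept when identifying the zeroth-order terms. One must also justify extending the null vector $U$ to a local vector field so that the Hessian comparison $\big(\Delta_\Omega\eta\big)(U,\o U)\geq \Delta_\Omega\big(\eta(U,\o U)\big)$ holds with the error terms controlled by $i_U\eta=0$; this is the standard Hamilton-style tensor maximum principle bookkeeping (choose $U$ covariantly constant to first order at $x_0$ with respect to the Chern connection of $\Omega$). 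The main obstacle — and the place where the specific geometry of the projective bundle enters — is verifying that the lower-order terms produced by commuting $\Delta_\Omega$ past $\p\b\p$ genuinely organize into $T$ plus nonnegative contributions that die on the null cone; this is exactly the content packaged into the definition (\ref{defnT}), and carrying it out requires the horizontal/vertical decomposition of $TP(E^*)$, the formula (\ref{Psi}) for $\Psi$, and the Kähler identities on each fiber. Everything else is routine once that computation is in hand; I expect the author's proof to establish this evolution identity first (likely as a preliminary lemma) and then apply the maximum principle as above.
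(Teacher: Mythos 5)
Your overall strategy --- derive a heat-type evolution equation for $\eta:=\sqrt{-1}\p\b{\p}\log G$ whose reaction term reduces to $T$ on the null cone, then invoke the maximum principle for real $(1,1)$-forms --- is exactly the paper's strategy, and your maximum-principle step matches Theorem \ref{thm1} and its application. The genuine gap is that the entire content of the theorem lies in the step you defer: you assert that commuting $\Delta_{\Omega}$ past $\p\b{\p}$ yields $\Delta_{\Omega}\eta$ plus lower-order terms organizing into $T$, but a direct Bochner/Weitzenb\"ock commutation with $\Delta_{\Omega}$ itself does not produce this. First, $\Omega=\omega(G)+\omega_{FS}$ is only Hermitian, so torsion terms appear; the paper kills them by imposing $\omega(G)=p^*\omega$ with $\omega$ K\"ahler, a hypothesis you never use. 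Second, and more seriously, the vertical Laplacian acting on $\eta(U,\o{U})$ and the mixed horizontal--vertical curvature of the reference metric each contribute terms with no individual sign. The paper's device is to compute with the degenerating family $\Omega_{\e}=\omega(G)+\e\sqrt{-1}\p\b{\p}\log G$, which is K\"ahler for small $\e>0$, check that $\p\b{\p}\Delta_{\Omega_{\e}}\log G\to\p\b{\p}\Delta_{\Omega}\log G$, and observe that the singular contributions $\pm\frac{1}{\e}\p\b{\p}\log\det(f_{a\b{b}})(u,\o{u})$ coming from the vertical Laplacian (via Lemma \ref{lemma3}) and from the curvature term $\frac{1}{\e^2}f^{a\b{b}}R^{\e}_{a\b{b}\sigma\b{\tau}}u^{\sigma}\o{u}^{\tau}$ cancel exactly as $\e\to 0$, leaving precisely $\langle R^g(u,\o{u}),-\Psi\rangle_{\Omega}-\left|i_u\p^V\Psi\right|^2_{\Omega}=(-\sqrt{-1})T(u,\o{u})$ with no leftover. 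The exact coefficients in $T$ depend on this limiting procedure; working with $\Omega$ at ``$\e=1$'' would give a different reaction term, to which the stated null eigenvector assumption would not apply. Without this (or an equivalent) mechanism, your evolution identity is unproved, and it is not a routine computation.

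Two smaller omissions. $T$ is a horizontal form, so for the null eigenvector assumption to be applicable you must first show that any null eigenvector of $\eta\geq 0$ is automatically horizontal; this follows from $\eta=-\Psi+\omega_{FS}$ and $\omega_{FS}>0$, and it converts the null condition $i_u\eta=0$ into $i_u\Psi=0$, which is what actually eliminates the unwanted terms in the computation. Also, the extension of $u$ must be taken parallel with respect to $\n^{\e}$ and then limited as $\e\to 0$; the resulting first-order conditions (\ref{3.111}) enter the cancellation, so ``covariantly constant with respect to the Chern connection of $\Omega$'' is not quite the right prescription.
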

 
 In this paper, we will give two applications of Theorem \ref{thm0}.
 
For the first application,  we consider the case of curve, i.e. $\dim M=1$. 
In this case, any Hermitian metric  on $M$ is K\"ahler automatically, and one can prove that the $(1,1)$-form $T$ satisfies 
 the null eigenvector assumption.
By Theorem \ref{thm0}, we obtain
\begin{prop}[=Proposition \ref{prop3}]\label{prop0}
If $M$ is a curve, then the semipositivity of the curvature of $\mc{O}_{P(E^*)}(1)$ is preserved along the flow (\ref{flow0}).	
\end{prop}
In particular, if $G_0=h_0^{i\b{j}}v_i\b{v}_j$ comes from a Hermitian metric $(h_0^{i\b{j}})$ of $E^*$ and 
\begin{align}
\Omega=p^*\omega+\omega_{FS}(G)	
\end{align}
for a fixed Hermitian metric $
\omega$, by Remark \ref{rem2} (1), (\ref{flow0}) is equivalent to the following Hermitian-Yang-Mills flow:
\begin{align}\label{HYM1}
\begin{cases}
	&h^{-1}\cdot \frac{\p h}{\p t}+\Lambda R^h+(r-1)I=0,\\
	&(h_{i\b{j}}(t))>0,\\
	&h_{i\b{j}}(0)=(h_0)_{i\b{j}}.
	\end{cases}
\end{align}
Recall that
\begin{defn}[\cite{Gri}]\label{defn2}
The Chern curvature of the metric $(h_{i\b{j}})$ is called {\it Griffiths positive} (resp. {\it Griffiths semipositive}) if 	
\begin{align*}
R_{i\b{j}\alpha\b{\beta}}X^i\o{X^j}Y^{\alpha}\o{Y^{\beta}}>0 \quad (\text{resp.}\geq 0)	
\end{align*}
for any two nonzero vectors $X=X^ie_i\in E$ and $Y=Y^{\alpha}\frac{\p}{\p z^{\alpha}}\in TM$. Here $R_{i\b{j}\alpha\b{\beta}}=-\frac{\p^2 h_{i\b{j}}}{\p z^\alpha\p\b{z}^\beta}+h^{\b{l}k}\frac{\p h_{i\b{l}}}{\p z^\alpha}\frac{\p h_{k\b{j}}}{\p\b{z}^\beta}$ denotes the Chern curvature of $(h_{i\b{j}})$.
For the case of $E=TM$, the Hermitian metric $(h_{i\b{j}})$ is called { \it has positive (nonnegative) holomorphic bisectional curvature} if its Chern curvature is Griffiths positive (semipositive).
\end{defn}
 By Proposition \ref{prop0}, we have
\begin{cor}[=Corollary \ref{cor3.2}]\label{cor0}
	If $M$ is a curve, then the Griffiths semipositivity  is preserved along the Hermitian-Yang-Mills flow (\ref{HYM1}).
\end{cor}
For the second application, we assume that $E=TM$ and take
\begin{align*}
\omega(G)=\sqrt{-1}g_{\alpha\b{\beta}}dz^{\alpha}\wedge d\b{z}^{\beta},
\end{align*}
where $(g_{\alpha\b{\beta}})$ denotes the inverse of the matrix  $\left(\frac{\p^2 G}{\p v_{\alpha}\p\b{v}_{\beta}}\right)$ (see Section \ref{sec1} for the definition of $\frac{\p^2 G}{\p v_{\alpha}\p\b{v}_{\beta}}$). Let $G_0=g_0^{\alpha\b{\beta}}v_\alpha\b{v}_\beta$ be the strongly pseudoconvex complex Finsler metric on $T^*M$ induced by the following K\"ahler metric 
\begin{align*}
\omega_0=\sqrt{-1}(g_0)_{\alpha\b{\beta}}dz^{\alpha}\wedge d\b{z}^{\beta}.
\end{align*}
Then the flow (\ref{flow0}) is equivalent to the following K\"ahler-Ricci flow
\begin{align}\label{KR0}
\begin{cases}
	& \frac{\p\omega}{\p t}+\text{Ric}(\omega)+(n-1)\omega=0,\\
	&\omega>0,\\
	&\omega(0)=\omega_0.
\end{cases}	
\end{align}
The solution of (\ref{flow0}) is induced from the K\"ahler metric $\omega=\sqrt{-1}g_{\alpha\b{\beta}}dz^{\alpha}\wedge d\b{z}^{\beta}$. In this case, the $(1,1)$-form $T$ can also be proved satisfying the null eigenvector assumption (see \cite[Page 254, Claim 2.2]{Bou}).
From Theorem \ref{thm0} and Definition \ref{defn2}, we can reprove the following Mok's proposition, which is contained in \cite[Proposition 1.1]{Mok} (see also \cite[Theorem 5.2.10]{Bou}).
\begin{prop}[{\cite[Proposition 1.1]{Mok}}]\label{prop0.1}
	If $(M, \omega_0)$ is a compact K\"ahler manifold with nonnegative holomorphic bisectional curvature, then the nonnegativity is preserved along the K\"ahler-Ricci flow (\ref{KR0}).
\end{prop}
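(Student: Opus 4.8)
The plan is to deduce the statement from Theorem \ref{thm0}, applied to the holomorphic Finsler bundle $(T^*M,G_0)$. The argument has four ingredients: (i) translate the hypothesis that $\omega_0$ has nonnegative holomorphic bisectional curvature into the hypothesis $\sqrt{-1}\p\b{\p}\log G_0\geq 0$ of Theorem \ref{thm0}; (ii) verify that the horizontal $(1,1)$-form $T$ of (\ref{defnT}) satisfies the null eigenvector assumption for this choice of data; (iii) invoke the equivalence, already recorded just before the statement, between the flow (\ref{flow0}) on $P(T^*M)$ for these initial data and the K\"ahler--Ricci flow (\ref{KR0}) on $M$; and (iv) translate the conclusion $\sqrt{-1}\p\b{\p}\log G(t)\geq 0$ of Theorem \ref{thm0} back into the assertion that the solution $\omega(t)$ of (\ref{KR0}) has nonnegative holomorphic bisectional curvature.

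For (i) and (iv) one fixes a single dictionary. Since $G_0=g_0^{\alpha\b{\beta}}v_\alpha\b{v}_\beta$ is the Hermitian complex Finsler metric on $T^*M$ dual to the K\"ahler metric $(g_0)_{\alpha\b{\beta}}$ on $TM$, and $\omega_0$ is K\"ahler, the Chern curvature appearing in the Kobayashi curvature $\Psi_0$ via (\ref{Psi}) is the Riemann curvature tensor $R^{g_0}$ of $\omega_0$. At a point $([v],z)$ of $P(T^*M)$ the decomposition $\sqrt{-1}\p\b{\p}\log G_0=-\Psi_0+\omega_{FS}$ (Remark \ref{rem1}) is block-diagonal for the splitting of $TP(T^*M)$ into the horizontal distribution of the Finsler connection and the vertical bundle: the vertical block is the fibrewise positive form $\omega_{FS}$, and the horizontal block $-\Psi_0$ is, up to a positive scalar, the Hermitian form $u\mapsto R^{g_0}(u,\o u,\tilde v,\o{\tilde v})$ on $T_zM$, where $\tilde v\in T_zM$ is the tangent vector dual to $v$ via $g_0$. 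Hence $\sqrt{-1}\p\b{\p}\log G_0\geq 0$ on $P(T^*M)$ is equivalent to $-\Psi_0\geq 0$, i.e. to the Griffiths semipositivity of $(TM,(g_0)_{\alpha\b{\beta}})$ in the sense of Definition \ref{defn2}; and since $E=TM$, the K\"ahler symmetry $R_{\alpha\b{\beta}\gamma\b{\delta}}=R_{\gamma\b{\beta}\alpha\b{\delta}}$ identifies this with the nonnegativity of the holomorphic bisectional curvature of $\omega_0$. Running the same chain of equivalences for the solution $\omega(t)=\sqrt{-1}g_{\alpha\b{\beta}}(t)dz^\alpha\w d\b{z}^\beta$ of (\ref{KR0}) and its induced Finsler metric $G(t)=g^{\alpha\b{\beta}}(t)v_\alpha\b{v}_\beta$ gives the reverse translation in (iv); so, once (ii) is available, Theorem \ref{thm0} finishes the proof: it yields $\sqrt{-1}\p\b{\p}\log G(t)\geq 0$ for all $t\geq 0$ for which the solution exists, hence nonnegative holomorphic bisectional curvature of $\omega(t)$ along (\ref{KR0}).

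The one genuinely new point is therefore (ii): one must show $(-\sqrt{-1})T(U,\o U)\geq 0$ whenever $\sqrt{-1}\p\b{\p}\log G\geq 0$ and $i_U(\sqrt{-1}\p\b{\p}\log G)=0$. Since $\sqrt{-1}\p\b{\p}\log G$ is block-diagonal with positive vertical block $\omega_{FS}$, such a null direction is necessarily horizontal, $U=u^\alpha\frac{\delta}{\delta z^\alpha}$ at some point $([v],z)$, and the vanishing says precisely that the holomorphic bisectional curvature of $\omega$ is zero on $u$ and the tangent vector dual to $v$. Unwinding (\ref{defnT}) for the Hermitian Finsler metric $G=g^{\alpha\b{\beta}}v_\alpha\b{v}_\beta$, the term $\langle R^g(u,\o u),-\Psi\rangle_\Omega$ becomes a quadratic expression in $R^g$, and $|i_u\p^V\Psi|^2_\Omega$ becomes a squared-norm term built from the fibrewise derivative of the Kobayashi curvature; after covariantizing, the resulting combination coincides with the curvature expression --- quadratic in $R^g$, corrected by a squared first covariant derivative --- whose nonnegativity on null eigenvectors of the bisectional curvature is the content of the tensor maximum principle of Bando and Mok, i.e. of \cite[Page 254, Claim 2.2]{Bou} (equivalently \cite{Mok}). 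The main obstacle is exactly this bookkeeping: matching the intrinsically projective-bundle objects $\Psi$, $\p^V\Psi$, $\omega_{FS}$ with the curvature tensor $R^g_{\gamma\b{\beta}\sigma\b{\tau}}$ and its covariant derivative $\nabla R^g$ on $M$, and checking that a null eigenvector of $\sqrt{-1}\p\b{\p}\log G$ on the total space is carried to a null vector of the bisectional curvature on $M$, so that the classical computation can be quoted verbatim.
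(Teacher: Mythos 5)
Your proposal follows the paper's proof essentially verbatim: the paper likewise combines Proposition \ref{prop2} (the dictionary between nonnegative holomorphic bisectional curvature and $\sqrt{-1}\p\b{\p}\log G\geq 0$ on $P(T^*M)$), the equivalence of (\ref{flow0}) with (\ref{KR0}) from Remark \ref{rem2} (2), and an explicit computation identifying $(-\sqrt{-1})T(u,\o{u})$ with $\sum_{\alpha,\beta}\bigl(R^g(V,\o{V},e_\alpha,\o{e_\beta})R^g(e_\beta,\o{e_\alpha},u,\o{u})-|R^g(V,\o{e_\alpha},u,\o{e_\beta})|^2\bigr)$, whose nonnegativity at a null eigenvector is exactly \cite[Page 254, Claim 2.2]{Bou}. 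One small correction to your description of step (ii): since $\Psi$ is quadratic in the fibre variable, $i_u\p^V\Psi$ reproduces the curvature tensor itself with one slot contracted against $V$ (using $i_u\Psi=0$ at the point), so the subtracted term is a square of components of $R^g$, not of a covariant derivative --- no $\nabla R^g$ enters the expression.
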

\begin{rem}
	By (\ref{HYM1}) and (\ref{KR0}), the flow (\ref{flow0}) is a natural generalization of both Hermitian-Yang-Mills flow and K\"ahler-Ricci flow. And there are some other flows, which are also the generalizations of the K\"ahler-Ricci flow. For example, Gill \cite{Gill} introduced the {\it Chern-Ricci flow} on Hermitian manifolds, and many  properties of the flow were established in \cite{To1, To2}. Especially,  Yang \cite{Yang}  proved the nonnegativity of the holomorphic bisectional curvature is not necessarily preserved under the Chern-Ricci flow. In \cite{Streets}, Streets and Tian introduced the {\it Hermitian curvature flow}, proved short time existence for this flow, and derive basic long time blowup and regularity results, and in \cite{Streets1, Streets2} they introduced a parabolic flow of pluriclosed metrics and obtained some regularity results for solutions to this equation. For a particular version of the Hermitian curvature flow, Ustinovskiy \cite{Yury} proved the the property of Griffiths positive (nonnegative) Chern curvature is preserved along this flow. 
\end{rem}
This article is organized as follows. In Section \ref{sec1},
 we shall fix the notation and recall some basic definitions and facts on complex Finsler vector bundles, Griffiths positivity (semipositivity), and maximal principle for real $(1,1)$-forms. In Section \ref{sec2}, we will define a flow over projective bundle $P(E^*)$ and study the positivity along this flow, Theorem \ref{thm0} would be proved in this section. In Section \ref{sec3}, we will give two applications of Theorem \ref{thm0}, and we will prove Proposition \ref{prop0}, Corollary \ref{cor0} and Proposition \ref{prop0.1}.

\vspace{5mm}
{\bf Acknowledgements.} The author would like to thank Professor Kefeng Liu and Professor Huitao Feng for their guidance over the years, 
and thank Professor Xiaokui Yang for many valuable discussions. The author would like to thank the anonymous referees for valuable
comments which helped to improve the paper.

%%%%%%%%%%%%%%%%%%%%%%%%%%%%%%%%%%%%%%%%%%%

\section{Preliminaries}\label{sec1}

In this section, we shall fix the notation and recall some basic definitions and facts on complex Finsler vector bundles, Griffiths positivity (semipositivity), and maximal principle for real $(1,1)$-forms.  For more details we refer to \cite{AP, Aikou, Chow, Cao-Wong, FLW, FLW2, FLW1, Gri, Ko1, Liu1, Munteanu, Wan}.
\subsection{Complex Finsler vector bundle} 
Let $M$ be a compact complex manifold of dimension $n$, and let $\pi:E\to M$ be a holomorphic vector bundle of rank $r$ over $M$.  Let $z=(z^1,\cdots, z^n)$ be a local coordinate system  in $M$, and $\{e_i\}_{1\leq i\leq r}$ be a local holomorphic frame of $E$. With respect to the local frame of $E$, an element of $E$ can be written as
$$v=v^i e_i\in E,$$
where we adopt the summation convention of Einstein. In this way, one gets a local coordinate system of the complex manifold $E$: 
\begin{align}\label{coor}
(z;v)=(z^1,\cdots,z^n; v^1,\cdots, v^r).
\end{align}

\begin{defn}[\cite{Ko1}]\label{defn1}
A Finsler metric  $G$ on the holomorphic vector bundle $E$ is a continuous function $G:E\to\mathbb{R}$ satisfying the following conditions:
\begin{description}
  \item [F1)] $G$ is smooth on $E^o=E\setminus O$, where $O$ denotes the zero section of $E$;
  \item[F2)] $G(z,v)\geq 0$ for all $(z,v)\in E$ with $z\in M$ and $v\in\pi^{-1}(z)$, and $G(z,v)=0$ if and only if $v=0$;
  \item[F3)] $G(z,\lambda v)=|\lambda|^2G(z,v)$ for all $\lambda\in\mathbb{C}$.
\end{description}
Moreover, $G$ is called strongly pseudoconvex if
\begin{description}
  \item[F4)] the Levi form ${\sqrt{-1}}\partial\bar\partial G$ on $E^o$ is positive-definite along each fiber $E_z=\pi^{-1}(z)$ for $z\in M$.
\end{description}
\end{defn}
Clearly, for any Hermitian metric on $E$, one can associate it with a strongly pseudoconvex complex Finsler metric on $E$.

We write
\begin{align*}
\begin{split}
& G_{i}=\partial G/\partial v^{i},\quad G_{\bar j}=\partial G/\partial\bar{v}^{j},\quad G_{i\bar{j}}=\partial^{2}G/\partial v^{i}\partial\bar{v}^{j},\\
& G_{i\alpha}=\partial^{2}G/\partial v^{i}\partial z^{\alpha}, \quad G_{i\bar j\bar\beta}=\partial^3G/\partial v^{i}\partial\bar v^j\partial\bar z^{\beta},\quad etc.,
\end{split}
\end{align*}
to denote the differentiation with respect to $v^i,\bar v^j$ ($1\leq i,j\leq r$), $z^\alpha,\bar z^\beta$ ($1\leq\alpha,\beta\leq n$). In the following lemma we collect some useful identities related to a Finsler metric $G$. 
\begin{lemma}[\cite{Cao-Wong, Ko1}]\label{1.1} The following identities hold for any $(z,v)\in E^o$, $\lambda\in \mathbb{C}$:
\begin{align*}
G_i(z,\lambda v)=\bar\lambda G_i(z,v),\quad G_{i\bar j}(z,\lambda v)=G_{i\bar j}(z,v)=\o{G_{j\bar i}(z,v)};
\end{align*}
\begin{align*}
G_i(z,v)v^i=G_{\bar j}(z,v)\bar v^j=G_{i\bar j}(z,v)v^i\bar v^j=G(z,v);
\end{align*}
\begin{align*}
G_{ij}(z,v)v^i=G_{i\bar j k}(z,v)v^i=G_{i\bar j\bar k}(z,v)\bar v^j=0.
\end{align*}
\end{lemma}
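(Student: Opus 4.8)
The plan is to derive all three groups of identities from the single structural hypothesis (F3): $G(z,\lambda v)=|\lambda|^2 G(z,v)$ for all $\lambda\in\mathbb C$, together with the smoothness of $G$ on $E^o$. The idea is standard Euler-type homogeneity differentiation, but one must be careful that $G$ is homogeneous of bidegree $(1,1)$ in $v$ (degree one in $v$, degree one in $\bar v$), not a single Euler identity. Concretely, I would first fix $z$, regard $G$ as a function of $v\in\mathbb C^r\setminus\{0\}$, and differentiate the relation $G(z,\lambda v)=\lambda\bar\lambda\, G(z,v)$ in the holomorphic and antiholomorphic directions.

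\textbf{Step 1 (first-order homogeneity in $v$).} Differentiate $G(z,\lambda v)=\lambda\bar\lambda G(z,v)$ with respect to $\lambda$ at fixed $\bar\lambda$ (treating $\lambda,\bar\lambda$ as independent via the chain rule on holomorphic/antiholomorphic parts): $v^i G_i(z,\lambda v)=\bar\lambda G(z,v)$. Setting $\lambda=1$ gives the Euler identity $G_i(z,v)v^i=G(z,v)$; replacing $v$ by $\lambda v$ and using (F3) again gives $G_i(z,\lambda v)v^i=\bar\lambda G(z,v)=\bar\lambda\lambda^{-1}G(z,\lambda v)$ on the line through $v$, from which $G_i(z,\lambda v)=\bar\lambda G_i(z,v)$ follows after noting $G_i$ is homogeneous of degree $0$ in $\lambda$ in the holomorphic slot. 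The conjugate statement for $G_{\bar j}$ is obtained the same way by differentiating in $\bar\lambda$, and taking $\lambda=1$ yields $G_{\bar j}(z,v)\bar v^j=G(z,v)$.

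\textbf{Step 2 (second-order identities).} Differentiate the degree-zero homogeneity $G_i(z,\lambda v)=\bar\lambda G_i(z,v)$ with respect to $\lambda$ to get $v^j G_{ij}(z,\lambda v)=0$, hence $G_{ij}(z,v)v^i=0$ (relabel indices and use symmetry of $G_{ij}$ in $i,j$). Similarly differentiating $G_i(z,\lambda v)=\bar\lambda G_i(z,v)$ with respect to $\bar\lambda$ gives $\bar v^j G_{i\bar j}(z,\lambda v)=G_i(z,v)$; combined with $G_i(z,v)v^i=G(z,v)$ this yields $G_{i\bar j}(z,v)v^i\bar v^j=G(z,v)$, and setting $\lambda=1$ in the intermediate relation also shows $G_{i\bar j}$ is homogeneous of bidegree $(0,0)$, i.e. $G_{i\bar j}(z,\lambda v)=G_{i\bar j}(z,v)$; reality $G_{i\bar j}=\overline{G_{j\bar i}}$ is immediate from $G$ being real-valued. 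For the mixed third derivatives, differentiate $G_{ij}(z,v)v^i=0$ in $\bar v^k$ to get $G_{ij\bar k}(z,v)v^i=0$, and differentiate $G_{i\bar j}(z,\lambda v)=G_{i\bar j}(z,v)$ in $\lambda$ and in $\bar\lambda$ to get $G_{i\bar j k}(z,v)v^i=0$ and $G_{i\bar j\bar k}(z,v)\bar v^j=0$ respectively.

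\textbf{Main obstacle.} The only genuine subtlety is bookkeeping: one must consistently treat $\lambda$ and $\bar\lambda$ as independent variables when applying the chain rule to $G(z,\lambda v)$, keep track of which slot ($v$ or $\bar v$) each partial derivative differentiates, and invoke smoothness on $E^o$ to justify evaluating the resulting identities on the whole punctured fiber (they are first established on complex lines through a given $v$, but since every nonzero point lies on such a line this is harmless). There is no analytic difficulty and no appeal to strong pseudoconvexity (F4) is needed; everything is a consequence of the $(1,1)$-homogeneity (F3). I would present the computation once for $G_i$ in full and then remark that the remaining identities follow by the same differentiation pattern and by conjugation.
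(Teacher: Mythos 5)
The paper does not actually prove Lemma \ref{1.1}; it quotes it from \cite{Cao-Wong, Ko1}, so there is no in-text argument to compare against. Your proposal is the standard homogeneity-differentiation argument and is essentially correct: every identity does follow from (F3) alone by repeated differentiation, and (F4) plays no role. One step as written, however, is circular. In Step 1 you establish only the contracted identity $G_i(z,\lambda v)v^i=\bar\lambda G(z,v)$ along the line through $v$, and then assert that the pointwise identity $G_i(z,\lambda v)=\bar\lambda G_i(z,v)$ ``follows after noting $G_i$ is homogeneous of degree $0$ in $\lambda$ in the holomorphic slot'' --- but that degree-$0$ homogeneity of $G_i$ (up to the factor $\bar\lambda$) is exactly the statement you are trying to prove, and it cannot be recovered from the single scalar contraction $G_i(z,\lambda v)v^i$. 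The correct and shorter route is to differentiate (F3) in $v^i$ rather than in $\lambda$: from $G(z,\lambda v)=\lambda\bar\lambda\,G(z,v)$ the chain rule gives $\lambda\,G_i(z,\lambda v)=\lambda\bar\lambda\,G_i(z,v)$, hence $G_i(z,\lambda v)=\bar\lambda\,G_i(z,v)$ for $\lambda\neq 0$ (for $\lambda=0$ the point $\lambda v$ leaves $E^o$, so that case must be excluded anyway). With that single repair the rest of your plan goes through exactly as described: differentiating this relation in $\bar v^j$, in $\lambda$, and in $\bar\lambda$ yields $G_{i\bar j}(z,\lambda v)=G_{i\bar j}(z,v)$, $G_{ij}v^i=0$, $G_{i\bar j k}v^i=0$ and $G_{i\bar j\bar k}\bar v^j=0$; the Euler identities $G_iv^i=G_{\bar j}\bar v^j=G_{i\bar j}v^i\bar v^j=G$ follow by contraction; and the symmetry $G_{i\bar j}=\o{G_{j\bar i}}$ is immediate from $G$ being real-valued.
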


If $G$ is a strongly pseudoconvex complex Finsler metric on $M$, then there is a canonical h-v decomposition of the holomorphic tangent bundle $TE^o$ of $E^o$ (see \cite[\S 5]{Cao-Wong} or \cite[\S 1]{FLW}).
\begin{align*}
TE^o=\mc{H}\oplus \mc{V}.
\end{align*}
In terms of local coordinates,
\begin{align*}
\mc{H}=\text{span}_{\mb{C}}\left\{\frac{\delta}{\delta z^\alpha}=\frac{\p}{\p z^\alpha}-G_{\alpha\b{j}}G^{\b{j}k}\frac{\p}{\p v^k}, 1\leq \alpha\leq n\right\},\quad \mc{V}=\text{span}_{\mb{C}}\left\{\frac{\p}{\p v^i}, 1\leq i\leq r\right\}.
\end{align*}
The dual bundle $T^*E^o$ also has a smooth h-v decomposition $T^*E^o=\mc{H}^*\oplus\mc{V}^*$:
\begin{align}\label{H}
\mc{H}^*=\text{span}_{\mb{C}}\{dz^{\alpha}, 1\leq \alpha\leq n\},\quad \mc{V}^*=\text{span}_{\mb{C}}\{\delta v^i=dv^i+G^{\b{j}i}G_{\alpha\b{j}}dz^\alpha,\quad 1\leq i\leq r\}.
\end{align}
Moreover, the differential operators
\begin{align}\label{HV}
\p^V=\frac{\p}{\p v^i}\otimes \delta v^i,\quad \p^H=\frac{\delta}{\delta z^{\alpha}}\otimes dz^{\alpha}.
\end{align}
are well-defined.

With respect to the h-v decomposition (\ref{H}), the $(1,1)$-form $\sqrt{-1}\p\b{\p}\log G$ has the following decomposition. For readers' convenience, we give a proof of the following lemma due to Kobayashi and Aikou (cf. \cite{Ko1, Aikou}).
\begin{lemma}[\cite{Ko1, Aikou}]\label{lemma1}
Let $G$ be a strongly pseudoconvex complex Finsler metric on $E$. One has
\begin{align*}
\sqrt{-1}\p\b{\p}\log G=-\Psi+\omega_{V},
\end{align*}
where $\Psi$ is called the Kobayashi curvature (see \cite[Definition 1.2]{FLW}),  
\begin{align}\label{Psi}
\Psi=\sqrt{-1}R_{i\b{j}\alpha\b{\beta}}\frac{v^i\b{v}^j}{G}dz^\alpha\wedge d\b{z}^\beta,\quad \omega_{V}=\sqrt{-1}\frac{\p^2 \log G}{\p v^i\p\b{v}^j}\delta v^i\wedge \delta\b{v}^j,
\end{align}
with
\begin{align*}
	R_{i\b{j}\alpha\b{\beta}}=-\frac{\p^2 G_{i\b{j}}}{\p z^\alpha\p\b{z}^\beta}+G^{\b{l}k}\frac{\p G_{i\b{l}}}{\p z^\alpha}\frac{\p G_{k\b{j}}}{\p\b{z}^\beta}.
\end{align*}
\end{lemma}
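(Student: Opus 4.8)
The plan is to verify the decomposition $\sqrt{-1}\p\b\p\log G=-\Psi+\omega_V$ by a direct local computation, working on $E^o$ with the adapted coframe $\{dz^\alpha,\delta v^i\}$ from (\ref{H}). The starting point is the elementary identity $\p\b\p\log G=\frac{\sqrt{-1}}{\,}\!\left(\frac{\p\b\p G}{G}-\frac{\p G\wedge\b\p G}{G^2}\right)$ (I will write this carefully with the right $\sqrt{-1}$ bookkeeping), so everything reduces to expanding $\p\b\p G$ and $\p G\wedge\b\p G$ in the full coordinate coframe $\{dz^\alpha,dv^i\}$ and then re-expressing the result in the h-v coframe. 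The homogeneity relations of Lemma \ref{1.1} — in particular $G_iv^i=G$, $G_{i\b j}v^i\b v^j=G$, and the vanishing relations $G_{ij}v^i=G_{i\b jk}v^i=G_{i\b j\b k}\b v^j=0$ — are exactly what will make the horizontal/vertical cross terms collapse into the stated form.

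First I would compute $\b\p G=G_{\b j}d\b v^j+G_{\b\beta}d\b z^\beta$ and then apply $\p$, producing the four blocks
\begin{align*}
\p\b\p G=G_{i\b j}\,dv^i\wedge d\b v^j+G_{\b j\alpha}\,dv^i?\!&\cdots
\end{align*}
— more precisely the $vv$, $vz$, $zv$, $zz$ blocks with coefficients $G_{i\b j}$, $G_{i\b\beta}$, $G_{\alpha\b j}$, $G_{\alpha\b\beta}$. Similarly $\p G\wedge\b\p G=(G_iv\text{-terms})$ splits into the same four block types with coefficients like $G_iG_{\b j}$, $G_iG_{\b\beta}$, $G_\alpha G_{\b j}$, $G_\alpha G_{\b\beta}$. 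The key algebraic move is the change of coframe $\delta v^i=dv^i+G^{\b jk}G_{\alpha\b j}?$ — here using (\ref{H}) — which is designed so that substituting $dv^i=\delta v^i-G^{\b jk}G_{\alpha\b k}dz^\alpha$ (and its conjugate) cancels the mixed horizontal-vertical second-derivative terms. After substitution one separates the genuinely vertical part $\sqrt{-1}\,\partial^2(\log G)/\partial v^i\partial\b v^j\,\delta v^i\wedge\delta\b v^j$, which is $\omega_V$ by definition, from a purely horizontal part; the latter, after using $G_{\alpha\b\beta}=\p^2G/\p z^\alpha\p\b z^\beta$, the definition of $G_{i\b j}$, and the homogeneity contractions $G_{i\b j}v^i\b v^j=G$, collapses to exactly $-\Psi$ with $R_{i\b j\alpha\b\beta}=-\p^2G_{i\b j}/\p z^\alpha\p\b z^\beta+G^{\b lk}(\p G_{i\b l}/\p z^\alpha)(\p G_{k\b j}/\p\b z^\beta)$. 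It is worth checking separately that $\Psi$ so defined is well-defined on $P(E^*)$ (independent of the representative $v$), which is immediate from $G_{i\b j}(z,\lambda v)=G_{i\b j}(z,v)$ and the factor $v^i\b v^j/G$ being $0$-homogeneous.

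The main obstacle I expect is purely organizational rather than conceptual: keeping track of which second-derivative terms survive the coframe change and correctly matching the connection coefficients $G_{\alpha\b j}G^{\b jk}$ appearing in $\delta/\delta z^\alpha$ and $\delta v^i$ with those produced by differentiating $\log G$. In particular one must be careful that the cross terms in $\p\b\p\log G$ involve derivatives of $G$ and $G^{-1}$ (from the $-\p G\wedge\b\p G/G^2$ piece), so the cancellation that yields the clean splitting uses both the definition of the horizontal distribution and the identity $\p_\alpha(G^{\b jk})=-G^{\b jl}G_{l\b m\alpha}G^{\b mk}$ together with the homogeneity vanishing $G_{l\b m\alpha}v^l?$ — I mean the relations of Lemma \ref{1.1}. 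Once the horizontal block is isolated, recognizing it as $-\Psi$ is a short computation: one differentiates $\log G$ twice in the base directions along the horizontal lift, the $-\p G\wedge\b\p G/G^2$ contribution supplies the term $G^{\b lk}\p_\alpha G_{i\b l}\,\b\p_\beta G_{k\b j}$ after contracting with $v^i\b v^j/G$, and the $\p\b\p G/G$ contribution supplies $-\p^2G_{i\b j}/\p z^\alpha\p\b z^\beta$ times $v^i\b v^j/G$. I would present the computation compactly, citing \cite{Ko1, Aikou} for the original derivation and only spelling out the steps that make the h-v splitting transparent.
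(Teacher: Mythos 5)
Your proposal is correct and is essentially the paper's own argument run in the opposite direction: the paper expands $\omega_V=\sqrt{-1}\,(\log G)_{i\b j}\,\delta v^i\wedge\delta\b v^j$ in the coordinate coframe and recognizes the result as $\p\b\p\log G-\sqrt{-1}\Psi$, whereas you expand $\p\b\p\log G$ in the coordinate coframe and pass to the adapted coframe $\{dz^\alpha,\delta v^i\}$, but both computations rest on exactly the same ingredients (the splitting $\p\b\p\log G=\p\b\p G/G-\p G\wedge\b\p G/G^2$, the h-v coframe from (\ref{H}), and the homogeneity contractions of Lemma \ref{1.1} such as $G_{i\b j}v^i\b v^j=G$ and $G_{\b j}G^{\b jk}=v^k$). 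The minor index placeholders in your sketch are harmless, and your added remark that $\Psi$ is $0$-homogeneous, hence well defined on the projectivization, is a correct supplementary observation.
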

\begin{proof}
By (\ref{H}), we have
\begin{align}\label{1.5}
\begin{split}
&\quad \frac{\partial^{2}\log G}{\partial v^{i}\partial\bar{v}^{j}}\delta v^{i}\wedge\delta\bar{v}^{j}=\frac{\partial^{2}\log G}{\partial v^{i}\partial\bar{v}^{j}}(dv^i+G_{\alpha\b{l}}G^{\b{l}i}dz^{\alpha})\wedge(d\b{v}^j+G_{\b{\beta}k}G^{\b{j}k}d\b{z}^{\beta})\\
&=\frac{\partial^{2}\log G}{\partial v^{i}\partial\bar{v}^{j}}d v^{i}\wedge d\bar{v}^{j}+\frac{\partial^{2}\log G}{\partial v^{i}\partial\bar{v}^{j}}G_{\b{\beta}k}G^{\b{j}k}dv^i\wedge d\bar{z}^{\beta}\\
&\quad +\frac{\partial^{2}\log G}{\partial v^{i}\partial\bar{v}^{j}}G_{\alpha\b{l}}G^{\b{l}i}dz^{\alpha}\wedge d\b{v}^j
+\frac{\p^2\log G}{\p v^i\p\b{v}^j}G_{\alpha\b{l}}G^{\b{l}i}G_{k\b{\beta}}G^{\b{j}k}dz^{\alpha}\wedge d\b{z}^{\beta}.
\end{split}
\end{align}
For the last three terms in the RHS of (\ref{1.5}), one has from Lemma \ref{1.1}
\begin{align}\label{1.8}
	\frac{\partial^{2}\log G}{\partial v^{i}\partial\bar{v}^{j}}G_{\b{\beta}k}G^{\b{j}k}dv^i\wedge d\bar{z}^{\beta}=\frac{GG_{i\b{j}}-G_iG_{\b{j}}}{G^2}G_{\b{\beta}k}G^{\b{j}k}dv^i\wedge d\bar{z}^{\beta}=\frac{\p^2\log G}{\p v^i\p\b{z}^{\beta}}dv^i\wedge d\b{z}^{\beta},
\end{align}
\begin{align}\label{1.6}
\frac{\partial^{2}\log G}{\partial v^{i}\partial\bar{v}^{j}}G_{\alpha\b{l}}G^{\b{l}i}dz^{\alpha}\wedge d\b{v}^j=\frac{GG_{i\b{j}}-G_iG_{\b{j}}}{G^2}G_{\alpha\b{l}}G^{\b{l}i}dz^{\alpha}\wedge d\b{v}^j=\frac{\p^2\log G}{\p z^{\alpha}\p\b{v}^j}dz^{\alpha}\wedge d\b{v}^j
\end{align}
and 
\begin{align}\label{1.7}
\begin{split}
	\frac{\p^2\log G}{\p v^i\p\b{v}^j}G_{\alpha\b{l}}G^{\b{l}i}G_{k\b{\beta}}G^{\b{j}k}dz^{\alpha}\wedge d\b{z}^{\beta}&=\frac{GG_{i\b{j}}-G_iG_{\b{j}}}{G^2}G_{\alpha\b{l}}G^{\b{l}i}G_{k\b{\beta}}G^{\b{j}k}dz^{\alpha}\wedge d\b{z}^{\beta}\\
	&=\frac{1}{G^2}(G G^{\b{l}k}G_{\alpha\b{l}}G_{k\b{\beta}}-G_{\alpha}G_{\b{\beta}})dz^{\alpha}\wedge d\b{z}^{\beta}.
	\end{split}
\end{align}

Submitting (\ref{1.8}), (\ref{1.6}) and (\ref{1.7}) into (\ref{1.5}), we obtain
\begin{align*}
	\frac{\partial^{2}\log G}{\partial v^{i}\partial\bar{v}^{j}}\delta v^{i}\wedge\delta\bar{v}^{j}&=\frac{\partial^{2}\log G}{\partial v^{i}\partial\bar{v}^{j}}d v^{i}\wedge d\bar{v}^{j}+\frac{\p^2\log G}{\p v^i\p\b{z}^{\beta}}dv^i\wedge d\b{z}^{\beta}\\
	&\quad+\frac{\p^2\log G}{\p z^{\alpha}\p\b{v}^j}dz^{\alpha}\wedge d\b{v}^j+\frac{1}{G^2}(G G^{\b{l}k}G_{\alpha\b{l}}G_{k\b{\beta}}-G_{\alpha}G_{\b{\beta}})dz^{\alpha}\wedge d\b{z}^{\beta}\\
	&=\p\b{\p}\log G+\frac{1}{G}(G^{\b{l}k}G_{\alpha\b{l}}G_{k\b{\beta}}-G_{\alpha\b{\beta}})dz^{\alpha}\wedge d\b{z}^{\beta}\\
	&=\p\b{\p}\log G+(G^{\b{l}k}G_{\alpha i\b{l}}G_{k\b{j}\b{\beta}}-G_{i\b{j}\alpha\b{\beta}})\frac{v^i\b{v}^j}{G}dz^{\alpha}\wedge d\b{z}^{\beta}\\
	&=\p\b{\p}\log G-\sqrt{-1}\Psi,
\end{align*}
which completes the proof.
	
\end{proof}

 Let $q$ denote the natural projection 
\begin{align*}
q: E^o\to P(E):=E^o/\mb{C}^*\quad (z; v)\mapsto (z;[v]):=(z^1,\cdots, z^n; [v^1,\cdots, v^r]),	
\end{align*}
which gives a local coordinate system of $P(E)$ by 
\begin{align}\label{2.4}
(z;w):=(z^1,\cdots, z^n; w^1,\cdots, w^{r-1})=\left(z^1,\cdots,z^n; \frac{v^1}{v^k},\cdots,\frac{v^{k-1}}{v^k},\frac{v^{k+1}}{v^{k}},\cdots, \frac{v^r}{v^k}\right)
\end{align}
on $U_k:=\{(z,[v])\in P(E), v^k\neq 0\}$.

Denote by $((\log G)^{\b{b}a})_{1\leq a,b\leq r-1}$  the inverse of the matrix  $\left((\log G)_{a\b{b}}:=\frac{\p^2\log G}{\p w^a\b{w}^b}\right)_{1\leq a,b\leq r-1}$ and set
\begin{align*}
\delta w^a=d w^a+(\log G)_{\alpha\b{b}}(\log G)^{\b{b}a}dz^{\alpha}.	
\end{align*}

One can define a vertical $(1,1)$-form on $P(E)$ by 
\begin{align}\label{vertical form}
\omega_{FS}(G):=\sqrt{-1}\frac{\p^2\log G}{\p w^a\p\b{w}^b}\delta w^a\wedge \delta\b{w}^b,	
\end{align}
which is well-defined (see e.g. \cite[Section 1]{FLW1}). Moreover, 
\begin{lemma}\label{lemma2}
By the pullback $q^*: A^{1,1}(P(E))\to A^{1,1}(E^o)$, one has
\begin{align}\label{1.9}
q^*\omega_{FS}(G)=\omega_{V}.	
\end{align}
\end{lemma}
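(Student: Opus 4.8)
The plan is to verify the identity locally over $P(E)$, one chart at a time. I fix a chart $U_k$ as in (\ref{2.4}) and, after permuting bundle indices, assume $k=r$; on $q^{-1}(U_r)\subset E^o$ I use the adapted holomorphic coordinates $(z^\alpha;\zeta,w^a)$ with $\zeta:=v^r$, $w^a:=v^a/v^r$ ($1\le a\le r-1$), so $v^a=\zeta w^a$, $v^r=\zeta$. The structural input is homogeneity (F3): it forces $\log G=\log|\zeta|^2+L(z,w)$ with $L(z,w):=\log G|_{\zeta=1}$, and since $\log|\zeta|^2$ is pluriharmonic, $\sqrt{-1}\,\partial\bar\partial\log G=\sqrt{-1}\,\partial\bar\partial L$ and every mixed second derivative of $\log G$ in these coordinates not involving $\zeta$ — in particular the quantities $(\log G)_{a\bar b}$ and $(\log G)_{\alpha\bar b}$ that enter the definition of $\omega_{FS}$ and of $\delta w^a$ — agrees with the corresponding second derivative of $L$. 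Hence, written in these coordinates on $E^o$, $q^*\omega_{FS}=\sqrt{-1}\,L_{a\bar b}\,\delta w^a\wedge\delta\bar w^b$ with $\delta w^a=dw^a+L_{\alpha\bar b}L^{\bar b a}\,dz^\alpha$.

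Next I complete the square: expanding $\sqrt{-1}\,L_{a\bar b}\,\delta w^a\wedge\delta\bar w^b$ and using $L_{a\bar b}L^{\bar b c}=\delta^c_a$ gives $\sqrt{-1}\,\partial\bar\partial L=q^*\omega_{FS}+\sqrt{-1}\,(L_{\alpha\bar\beta}-L_{\alpha\bar b}L^{\bar b a}L_{a\bar\beta})\,dz^\alpha\wedge d\bar z^\beta$. Comparing with Lemma \ref{lemma1}, $\sqrt{-1}\,\partial\bar\partial L=\sqrt{-1}\,\partial\bar\partial\log G=-\Psi+\omega_V$, the lemma reduces (via the formula (\ref{Psi}) for $\Psi$) to the purely horizontal identity $R_{i\bar j\alpha\bar\beta}\,v^i\bar v^j/G=L_{\alpha\bar b}L^{\bar b a}L_{a\bar\beta}-L_{\alpha\bar\beta}$. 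To evaluate the right-hand side I transport the Hessian of $L$ back to $v$-derivatives of $\log G$ through the holomorphic substitution $v^a=\zeta w^a$, $v^r=\zeta$; the bookkeeping shows the powers of $\zeta,\bar\zeta$ carried individually by $L_{\alpha\bar b}$, $L^{\bar b a}$, $L_{a\bar\beta}$ cancel in the product, so that at the point $v$ the right-hand side equals $(\log G)_{\alpha\bar b}\,Q^{\bar b a}\,(\log G)_{a\bar\beta}-(\log G)_{\alpha\bar\beta}$, where now $(\log G)_{\alpha\bar b}=\partial^2\log G/\partial z^\alpha\partial\bar v^b$, etc., are the $v$-coordinate second derivatives of $\log G$ at $v$ and $Q$ is the inverse of the principal $(r-1)\times(r-1)$ submatrix of $P:=((\log G)_{i\bar j})_{1\le i,j\le r}$ obtained by deleting the $r$-th row and column.

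Now I bring in the two identities already established inside the proof of Lemma \ref{lemma1} (cf. (\ref{1.6}) and (\ref{1.7})): writing $N^i_\alpha:=\sum_l G^{\bar l i}G_{\alpha\bar l}$, one has $(\log G)_{\alpha\bar b}=\sum_i P_{i\bar b}N^i_\alpha$ and $(\log G)_{\alpha\bar\beta}=\sum_{i,j}P_{i\bar j}N^i_\alpha\overline{N^j_\beta}-R_{i\bar j\alpha\bar\beta}v^i\bar v^j/G$. Substituting these in, the target identity collapses to the finite-dimensional statement $\sum_{a,b=1}^{r-1}P_{i\bar b}\,Q^{\bar b a}\,P_{a\bar j}=P_{i\bar j}$ for all $1\le i,j\le r$. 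This I would prove directly from Lemma \ref{1.1}: the relations there give $\sum_i v^i P_{i\bar j}=0$ and $\sum_j P_{i\bar j}\bar v^j=0$, so $v$ is a left and $\bar v$ a right null vector of $P$; since $v^r\ne0$ the deleted submatrix is invertible. Letting $Q^\sharp$ be $Q$ padded by a zero $r$-th row and column, the first $r-1$ components of $P\bar v=0$ give $Q^\sharp P=I-(\bar v^r)^{-1}\bar v\,e_r^{T}$, whence $P\,Q^\sharp\,P=P-(\bar v^r)^{-1}(P\bar v)\,e_r^{T}=P$, which is exactly the identity wanted. (Equivalently one does a three-line case split according to whether $i$ or $j$ equals $r$.) Running the chain of equivalences backwards gives $q^*\omega_{FS}=\omega_V$ on $q^{-1}(U_r)$, and since the chart was arbitrary this proves the lemma.

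I do not expect a genuine obstacle here; the single delicate point — and where an index or $\zeta$-power slip would most easily occur — is the passage from the intrinsic definition of $\omega_{FS}$ on $P(E)$ to its pullback written in the $(z;\zeta,w)$-coordinates on $E^o$ and then back to $(z;v)$-coordinates. One must keep in mind that $(\log G)_{a\bar b}$ as used to build $\omega_{FS}$ is a function on $P(E)$ (well defined there only because $\log|v^r|^2$ is pluriharmonic), identify it correctly with the deleted principal submatrix of the Finsler Hessian $P$, and check that the powers of $\zeta$ and $\bar\zeta$ carried by $L_{\alpha\bar b}$, $L^{\bar b a}$ and $L_{a\bar\beta}$ cancel in their product, so that the final identity is a genuine pointwise identity on $E^o$ and Lemma \ref{1.1} applies verbatim.
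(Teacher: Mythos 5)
Your argument is correct, but it reaches the identity by a genuinely different route than the paper. The paper's proof is a direct frontal computation of $q^*\omega_{FS}$: it computes $q_*$ of the coordinate fields as in (\ref{2.5}), writes down the explicit inverse $(\log G)^{\bar b a}$ in terms of $G^{\bar b a}$ (formula (\ref{1.10}), asserted ``by a direct checking''), deduces $q^*(\delta\bar w^b)=\tfrac{1}{\bar v^r}\delta\bar v^b-\tfrac{\bar v^b}{(\bar v^r)^2}\delta\bar v^r$ as in (\ref{1.13}), and then expands $q^*\omega_{FS}$ and extends the sum from $a,b\le r-1$ to $i,j\le r$ using the relations (\ref{1.12}). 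You instead run the ``completing the square'' computation of Lemma \ref{lemma1} a second time, in the $(z,w)$-coordinates on the chart of $P(E)$, so that $\sqrt{-1}\partial\bar\partial\log G$ acquires two h--v decompositions whose vertical parts are $\omega_V$ and $q^*\omega_{FS}$; the lemma then reduces to matching the horizontal parts, i.e.\ to the scalar identity $R_{i\bar j\alpha\bar\beta}v^i\bar v^j/G=(\log G)_{\alpha\bar b}Q^{\bar b a}(\log G)_{a\bar\beta}-(\log G)_{\alpha\bar\beta}$, which after feeding in the identities already inside the proof of Lemma \ref{lemma1} collapses to the generalized-inverse relation $PQ^{\sharp}P=P$ for the rank-$(r-1)$ Hessian $P=((\log G)_{i\bar j})$ with null vector $v$. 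I checked the delicate points: the $\zeta$-powers do cancel in $L_{\alpha\bar b}L^{\bar b a}L_{a\bar\beta}$ (they are $\bar v^r\cdot|v^r|^{-2}\cdot v^r=1$), the invertibility of the deleted principal submatrix does follow from strong pseudoconvexity together with $v^r\neq0$, and $PQ^{\sharp}P=P$ holds exactly as you derive it. What your route buys is that it avoids having to guess and verify the explicit inverse formula (\ref{1.10}) and the pullback (\ref{1.13}), replacing them by clean linear algebra; what the paper's route buys is that those explicit formulas are reused later (e.g.\ in Remark \ref{rem1} for $q_*(\delta/\delta z^\alpha)$ and the vertical Laplacian), so the paper gets them as byproducts whereas you would still need to establish them separately for the rest of the argument.
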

\begin{proof}
 We only need to prove (\ref{1.9}) at one point $(z,[v])\in U_k$. Without loss of generality, we assume that $k=r$. For any point $(z,[v])\in U_r$, one has from (\ref{2.4})
 \begin{align}\label{2.5}
 	q_*\left(\frac{\p}{\p v^r}\right)=-\sum_{a=1}^{r-1}\frac{v^{a}}{(v^r)^2}\frac{\p}{\p w^{a}},\quad  q_*\left(\frac{\partial}{\partial v^{b}}\right)=\frac{1}{v^{r}}\frac{\partial}{\partial w^{b}},\quad 1\leq b\leq r-1.
 \end{align}

By (\ref{2.5}), one has
\begin{align}\label{1.144}
\begin{split}
\frac{\p^2\log G}{\p w^a\p\b{w}^b}&=(\p\b{\p}\log G)(\frac{\p}{\p w^a},\frac{\p}{\p \b{w}^b})\\
&=(\p\b{\p}\log G)(q_*(v^r\frac{\p}{\p v^a}),q_*(\b{v}^r\frac{\p}{\p \b{v}^b}))\\
&=q^*(\p\b{\p}\log G)(v^r\frac{\p}{\p v^a},\b{v}^r\frac{\p}{\p \b{v}^b})\\
&=|v^r|^2\frac{\p^2\log G}{\p v^a\p\b{v}^b}.
\end{split}
\end{align}
Similarly, 
\begin{align}\label{1.11}
\frac{\p^2\log G}{\p z^{\alpha}\p\b{w}^b}=\b{v}^r\frac{\p^2\log G}{\p z^{\alpha}\p\b{v}^b},\quad 	\frac{\p^2\log G}{\p w^a\p\b{z}^{\beta}}=v^r\frac{\p^2\log G}{\p v^a\p\b{z}^{\beta}}
\end{align}
and
\begin{align}\label{1.12}
\frac{\p^2\log G}{\p v^a\p\b{v}^r}=-\frac{1}{|v^r|^2}\frac{\b{v}^b}{\b{v}^r}\frac{\p^2\log G}{\p w^a\p\b{w}^b},\quad \frac{\p^2\log G}{\p v^r\p\b{v}^b}=-\frac{1}{|v^r|^2}\frac{v^a}{v^r}\frac{\p^2\log G}{\p w^a\p\b{w}^b},\quad \frac{\p^2\log G}{\p v^r\p\b{v}^r}=\frac{v^a\b{v}^b}{|v^r|^4}\frac{\p^2\log G}{\p w^a\p\b{w}^b}.	
\end{align}

By a direct checking, one has
  \begin{align}\label{1.10}
  (\log G)^{\bar{b}a}=\frac{G}{|v^{r}|^{2}}\left(-\frac{v^{a}}{v^{r}}G^{\bar{b}r}+G^{\bar{b}a}
  +\frac{\bar{v}^{b}v^{a}}{|v^{r}|^{2}}G^{\bar{r}r}
  -\frac{\bar{v}^{b}}{\bar{v}^{r}}G^{\bar{r}a}\right).\end{align}
By (\ref{1.11}) and (\ref{1.10}), we have
\begin{align*}
(\log G)_{a\b{\beta}}(\log G)^{\b{b}a}=\left(\frac{1}{\b{v}^r}G^{\b{b}i}-\frac{\b{v}^b}{(\b{v}^r)^2}G^{\b{r}i}\right)(G_{i\b{\beta}}-\frac{1}{G}G_{\b{\beta}}G_i)=\frac{1}{\b{v}^r}G^{\b{b}i}G_{i\b{\beta}}-\frac{\b{v}^b}{(\b{v}^r)^2}G^{\b{r}i}G_{i\b{\beta}}.	
\end{align*}
So
\begin{align}\label{1.13}
\begin{split}
	q^*(\delta\b{w}^b)&=q^*(d\b{w}^b+(\log G)_{a\b{\beta}}(\log G)^{\b{b}a}d\b{z}^{\beta})\\
	&=\frac{1}{\b{v}^r}(d\b{v}^b+G^{\b{b}i}G_{i\b{\beta}}d\b{z}^{\beta})-\frac{\b{v}^b}{(\b{v}^r)^2}(d\b{v}^r+G^{\b{r}i}G_{i\b{\beta}}d\b{z}^{\beta})\\
	&=\frac{1}{\b{v}^r}\delta \b{v}^b-\frac{\b{v}^b}{(\b{v}^r)^2}\delta\b{v}^r.
\end{split}	
\end{align}
From (\ref{1.144}), (\ref{1.12}) and (\ref{1.13}), we obtain
\begin{align*}
	\begin{split}
		q^*\omega_{FS}(G)&=q^*\left(\sqrt{-1}\frac{\p^2\log G}{\p w^a\p\b{w}^b}\delta w^a\wedge \delta\b{w}^b\right)\\
		&=\sqrt{-1}|v^r|^2\frac{\p^2\log G}{\p v^a\p\b{v}^b}\left(\frac{1}{v^r}\delta v^a-\frac{v^a}{(v^r)^2}\delta v^r\right)\left(\frac{1}{\b{v}^r}\delta \b{v}^b-\frac{\b{v}^b}{(\b{v}^r)^2}\delta\b{v}^r\right)\\
		&=\sqrt{-1}\frac{\p^2\log G}{\p v^a\p\b{v}^b}\left(\delta v^a\wedge \delta \b{v}^b-\frac{\b{v}^b}{\b{v}^r}\delta v^a\wedge \delta\b{v}^r-\frac{v^a}{v^r}\delta v^r\wedge \delta\b{v}^b+\frac{v^a\b{v}^b}{|\b{v}^r|^2}\delta v^r\wedge \delta\b{v}^r\right)\\
		&=\sqrt{-1}\sum_{i,j=1}^r\frac{\p^2\log G}{\p v^i\p\b{v}^j}\delta v^i\wedge \delta\b{v}^j=\omega_V.
	\end{split}
\end{align*}
\end{proof}
\begin{rem}
\begin{itemize}\label{rem1}
\item[(1)] By (\ref{2.5}) and (\ref{1.10}), we get
\begin{align*}
\begin{split}
q_*(\frac{\delta}{\delta z^{\alpha}})&=q_*(\frac{\p}{\p z^{\alpha}}-G_{\alpha\b{l}}G^{\b{l}k}\frac{\p}{\p v^k})\\
&=\frac{\p}{\p z^{\alpha}}-(\frac{1}{v^r}G_{\alpha\b{l}}G^{\b{l}a}-\frac{v^a}{(v^r)^2}G_{\alpha\b{l}}G^{\b{l}r})\frac{\p}{\p w^a}\\
&=\frac{\p}{\p z^\alpha}-(\log G)_{\alpha\b{b}}(\log G)^{\b{b}a}\frac{\p}{\p w^a}.	
\end{split}
\end{align*}
For convenience, we will identify $\frac{\delta}{\delta z^{\alpha}}$ with $q_*(\frac{\delta}{\delta z^{\alpha}})$, and denote $N_{\alpha}^a:=(\log G)_{\alpha\b{b}}(\log G)^{\b{b}a}$, so 
\begin{align}\label{horizontal frame}
\frac{\delta}{\delta z^{\alpha}}=\frac{\p}{\p z^{\alpha}}-N^a_{\alpha}\frac{\p}{\p w^a}.	
\end{align}

\item[(2)] For any smooth function $f\in C^{\infty}(P(E))$, the vertical Laplacian is defined by 
\begin{align*}
\Delta^V f:=(\log G)^{\b{b}a}\frac{\p^2}{\p w^a\p\b{w}^b}f.	
\end{align*}
By identifying $f$ with $q^*f$, one has from (\ref{2.5}) and (\ref{1.10})
\begin{align*}
\Delta^V f=(\log G)^{\b{b}a}\frac{\p^2}{\p w^a\p\b{w}^b}f=GG^{i\b{j}}\frac{\p^2}{\p v^i\p\b{v}^j}f.	
\end{align*}
\item[(3)] For the Hermitian metric 
$$\Omega=\omega(G)+\omega_{FS}(G)$$
on $P(E)$,
where $\omega(G)=\sqrt{-1}g_{\alpha\b{\beta}}dz^\alpha\wedge d\b{z}^\beta$, then one can define the {\it horizontal laplacian} by 
\begin{align}
\Delta^H f:=g^{\alpha\b{\beta}}(\p\b{\p}f)(\frac{\delta}{\delta z^\alpha},\frac{\delta}{\delta\b{z}^\beta})	
\end{align}
for any smooth function $f\in C^{\infty}(P(E))$.
\item[(4)] Note that $\frac{\sqrt{-1}}{2\pi}\p\b{\p}\log G$ is a $(1,1)$-form on $P(E)$, which represents the first Chern class $c_1(\mc{O}_{P(E)}(1))$. And $\Psi$ is also a $(1,1)$-form on $P(E)$, combining Lemma \ref{lemma1} with Lemma \ref{lemma2}, one has when restricting to $P(E)$
\begin{align}\label{1.14}
	\sqrt{-1}\p\b{\p}\log G=-\Psi+\omega_{FS}(G).
\end{align}
\end{itemize}
\end{rem}

\begin{prop}\label{prop1}
A Finsler metric $G$ is strongly pseudoconvex if and only if $\omega_{FS}(G)$ is positive along each fiber of $p: P(E)\to M$. 
\end{prop}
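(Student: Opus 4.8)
The plan is to reduce the statement to a pointwise question on a single fibre and then to elementary linear algebra. First I would fix $z\in M$ and restrict everything to $E_z^o=\pi^{-1}(z)\setminus\{0\}$, with fibre coordinates $v=(v^1,\dots,v^r)$. Along $E_z$ one has $dz^\alpha=0$, so the Levi form becomes $\sqrt{-1}\p\b\p G|_{E_z}=\sqrt{-1}G_{i\b j}\,dv^i\w d\b v^j$, and condition $(\mathrm{F4})$ at $z$ says precisely that the Hermitian matrix $(G_{i\b j})$ is positive-definite on $E_z^o$. On the other side, $\delta v^i=dv^i$ along $E_z$, hence $\omega_{V}|_{E_z^o}=\sqrt{-1}\,\tfrac{\p^2\log G}{\p v^i\p\b v^j}\,dv^i\w d\b v^j$; by Lemma \ref{lemma2} we have $q^{*}\omega_{FS}=\omega_{V}$, and $q$ restricts on $E_z^o$ to the $\mb C^{*}$-principal bundle $E_z^o\to P(E_z)=p^{-1}(z)$, whose fibre at $v$ is tangent to the Euler line $\mb C v$. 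Since the pullback of a $(1,1)$-form under such a bundle is positive on the base exactly when it is positive semidefinite on the total space with kernel equal to the orbit direction at every point, $\omega_{FS}$ is positive on $p^{-1}(z)$ if and only if $\big(\tfrac{\p^2\log G}{\p v^i\p\b v^j}\big)\ge 0$ with kernel exactly $\mb C v$. So the proposition reduces to the pointwise equivalence, at each $(z,v)\in E^o$, of ``$(G_{i\b j})>0$'' with ``$\big(\tfrac{\p^2\log G}{\p v^i\p\b v^j}\big)\ge 0$ having one-dimensional kernel $\mb C v$''.

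For the linear algebra I would use two consequences of Lemma \ref{1.1}: differentiating $G_{\b j}\b v^j=G$ in $v^i$ gives $G_i=G_{i\b j}\b v^j$, and (as in the proof of Lemma \ref{lemma1}) $\tfrac{\p^2\log G}{\p v^i\p\b v^j}=\tfrac{G_{i\b j}}{G}-\tfrac{G_iG_{\b j}}{G^{2}}$, equivalently $G_{i\b j}=G\,\tfrac{\p^2\log G}{\p v^i\p\b v^j}+\tfrac{G_iG_{\b j}}{G}$; recall also $G>0$ on $E^o$ by $(\mathrm{F2})$. For the forward direction, assume $(G_{i\b j})>0$; for $\xi=(\xi^i)$ write $G_i\xi^i=G_{i\b j}\xi^i\b v^j$ and apply Cauchy--Schwarz for the positive inner product $G_{i\b j}$, giving $|G_i\xi^i|^{2}\le (G_{i\b j}\xi^i\b\xi^j)(G_{i\b j}v^i\b v^j)=G\cdot G_{i\b j}\xi^i\b\xi^j$. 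Plugging this into the first identity yields $\tfrac{\p^2\log G}{\p v^i\p\b v^j}\xi^i\b\xi^j\ge 0$, with equality precisely when $\xi$ is a multiple of $v$ (equality case of Cauchy--Schwarz, using $v\ne 0$); hence the form is positive semidefinite with kernel exactly $\mb C v$, so $\omega_{FS}>0$ on the fibres of $p$. For the converse, assume $\big(\tfrac{\p^2\log G}{\p v^i\p\b v^j}\big)\ge 0$ with kernel $\mb C v$; for $0\ne\xi$ the second identity exhibits $G_{i\b j}\xi^i\b\xi^j=G\,\tfrac{\p^2\log G}{\p v^i\p\b v^j}\xi^i\b\xi^j+\tfrac{|G_i\xi^i|^{2}}{G}$ as a sum of two nonnegative terms that cannot both vanish: vanishing of the first forces $\xi=cv$, and then $G_i\xi^i=cG_iv^i=cG$, which is nonzero unless $c=0$. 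Thus $(G_{i\b j})>0$ on $E_z^o$, i.e. $G$ is strongly pseudoconvex.

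I expect the only genuinely delicate point to be the descent in the first step: one must check that the rank drop of $\omega_{V}$ along the fibres of $\pi$ is exactly one and is carried by the Euler direction $\mb C v$, so that the induced form $\omega_{FS}$ on $P(E_z)$ is strictly positive rather than merely semipositive. This is exactly why, in the forward direction, it is essential to pin down the kernel of $\big(\tfrac{\p^2\log G}{\p v^i\p\b v^j}\big)$ as $\mb C v$ and not merely as ``nontrivial'', and the equality case of Cauchy--Schwarz is what supplies this. Everything else is the two displayed identities together with the Cauchy--Schwarz inequality, so no further work should be needed.
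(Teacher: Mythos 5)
Your proof is correct and follows essentially the same route as the paper: both reduce to the pointwise identity $\frac{\p^2\log G}{\p v^i\p\b v^j}=\frac{1}{G^2}(GG_{i\b j}-G_iG_{\b j})$ and use the Cauchy--Schwarz inequality for the inner product $G_{i\b j}$ (with the Euler vector $v^i\p/\p v^i$ as the equality direction) in one direction, and the decomposition $G_{i\b j}X^i\b X^j=\frac{1}{G}|G_iX^i|^2+G\cdot\frac{\p^2\log G}{\p v^i\p\b v^j}X^i\b X^j$ in the other. Your extra care in identifying the kernel of $\omega_V$ with the $\mb C^*$-orbit direction before descending to $P(E)$ is exactly the point the paper handles via $\omega_V(T,\o T)=0$ and $q_*(T)=0$, so nothing further is needed.
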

\begin{proof}
	By Definition \ref{defn1}, $G$ is strongly pseudoconvex if $(G_{i\b{j}})$ is a positive definite matrix, which gives a inner product $\langle\cdot,\cdot\rangle$ on the vertical subbundle $\mc{V}$. 
	
	Denote $T=v^i\frac{\p}{\p v^i}$. If $G$ is strongly pseudoconvex, for any $X=X^i\frac{\p}{\p v^i}$, then 
	\begin{align*}
	\begin{split}
	(-\sqrt{-1})\omega_{V}(X,\b{X}) &=\frac{1}{G^2}(GG_{i\b{j}}-G_iG_{\b{j}})X^i\b{X}^j\\
	&=\frac{1}{G^2}(\|X\|^2\|T\|^2-|\langle X,T\rangle|^2)\geq 0,	
	\end{split}
	\end{align*}
the equality holds if and only if $X=\lambda T$ for some constant $\lambda\in\mb{C}$. So $\omega_V$ has $r-1$ positive eigenvalues and one zero eigenvalue. Since $\omega_V(T,\o{T})=0$ and $q_*(T)=0$, by Lemma \ref{lemma2}, $\omega_{FS}(G)$ is positive along each fiber of $p: P(E)\to M$.   

Conversely, if $\omega_{FS}(G)$ is positive along each fiber, then $\omega_V=q^*\omega_{FS}(G)$ has $r-1$ positive eigenvalues and one zero eigenvalue, and $\omega_V(T,\o{T})=0$. So
\begin{align*}
G_{i\b{j}}X^i\b{X}^j=\frac{1}{G}|G_i X^i|^2+G(-\sqrt{-1})\omega_V(X,\o{X})\geq 0.	
\end{align*}
 Moreover, $G_{i\b{j}}X^i\b{X}^j=0$ if and only if $X=\lambda T$ and $G_iX^i=0$, if and only if $\lambda=0$. So $(G_{i\b{j}})$ is a positive definite matrix. 
\end{proof}

Let $(h_{i\b{j}})$ be a Hermitian metric on $E$ with respect to a local holomorphic frame $\{e_i\}_{1\leq i\leq r}$. 
The Hermitian metric $(h_{i\b{j}})$ induces a strongly pseudoconvex complex Finsler metric on $E^*$ by
\begin{align*}
G:=h^{i\b{j}}v_i\b{v}_j.	
\end{align*}
By Remark \ref{rem1}, we have the following decomposition 
\begin{align*}
\sqrt{-1}\p\b{\p}\log G=-\Psi+\omega_{FS}(G),	
\end{align*}
where 
\begin{align*}
-\Psi=-R^{i\b{j}}_{~~\alpha\b{\beta}}\frac{v_i\b{v}_j}{G}\sqrt{-1}dz^{\alpha}\wedge d\b{z}^{\beta}=R_{k\b{l}\alpha\b{\beta}}\frac{G^{\b{l}i}v_iG^{\b{j}k}\b{v}_j}{G}\sqrt{-1}dz^{\alpha}\wedge d\b{z}^{\beta}.	
\end{align*}

From Proposition \ref{prop1}, $\omega_{FS}(G)$ is positive along fibers.  So
\begin{prop}\label{prop2}
	The Chern curvature of $(h_{i\b{j}})$ is Griffiths positive (resp. semipositive)if and only if $\sqrt{-1}\p\b{\p}\log G$ is a positive (resp. semipositive) $(1,1)$-form on $P(E^*)$.
\end{prop}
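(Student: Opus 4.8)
The plan is to transfer everything to $P(E^*)$ via the decomposition of Remark \ref{rem1}(3) applied to the dual bundle: with $G=h^{i\bar j}v_i\bar v_j$ one has $\sqrt{-1}\partial\bar\partial\log G=-\Psi+\omega_{FS}$ on $P(E^*)$, where, as recorded just before the statement, $-\Psi=R_{k\bar l\alpha\bar\beta}\tfrac{G^{\bar li}v_i\,G^{\bar jk}\bar v_j}{G}\sqrt{-1}\,dz^\alpha\wedge d\bar z^\beta$. I would then test (semi)positivity of the left-hand side against an arbitrary nonzero $(1,0)$-vector $U$ of $TP(E^*)$, split as $U=U^H+U^V$ into its horizontal and vertical parts in the frame $\{\tfrac{\delta}{\delta z^\alpha},\tfrac{\p}{\p w^a}\}$, using two facts about the two summands.

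\emph{The vertical term.} Since $\delta w^a$ annihilates the horizontal distribution $\mc{H}$ and restricts to the Fubini--Study coframe along each fiber, $\omega_{FS}$ is a semipositive $(1,1)$-form on $P(E^*)$ with kernel exactly $\mc{H}$: its value on $(U,\bar U)$ is $(\log G)_{a\bar b}(U^V)^a\overline{(U^V)^b}$, which is positive definite in $U^V$ because $G$ is strongly pseudoconvex (Proposition \ref{prop1}), as it is here, being induced by the Hermitian metric $h$. \emph{The horizontal term.} The form $-\Psi$ carries no $\delta w$-components, so $(-\Psi)(U,\bar U)$ depends only on $U^H$; writing $U^H=Y^\alpha\tfrac{\delta}{\delta z^\alpha}$ and setting $X=X(v)\in E_z$ by $X^k:=G^{\bar jk}\bar v_j$ (so $\overline{X^l}=G^{\bar li}v_i$, since $(G^{\bar jk})$ is Hermitian), one gets $(-\Psi)(U,\bar U)=\tfrac1G R_{k\bar l\alpha\bar\beta}X^k\overline{X^l}Y^\alpha\overline{Y^\beta}$, a positive multiple of the Chern curvature of $h$ evaluated on $(X,\,p_*U^H)$. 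As $(G_{i\bar j})=(h^{i\bar j})$, the matrix $(G^{\bar jk})$ equals $(h_{k\bar j})$, so $v\mapsto X(v)$ carries $E^*_z\setminus\{0\}$ onto $E_z\setminus\{0\}$; hence as $[v]$ runs over the fiber of $P(E^*)\to M$ the vector $X(v)$ realizes every nonzero direction of $E_z$, and $p_*U^H\ne0$ precisely when $U^H\ne0$.

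Given these, the equivalence is a short case check. If $\sqrt{-1}\partial\bar\partial\log G\ge0$ (resp. $>0$) on $P(E^*)$, evaluate it on a nonzero \emph{horizontal} $U=Y^\alpha\tfrac{\delta}{\delta z^\alpha}$ at a point $[v]$: the $\omega_{FS}$-term vanishes, leaving $R_{k\bar l\alpha\bar\beta}X^k\overline{X^l}Y^\alpha\overline{Y^\beta}\ge0$ (resp. $>0$); letting $[v]$ and $Y$ vary this is exactly Griffiths semipositivity (resp. positivity) of $h$. Conversely, if $h$ is Griffiths semipositive then $(-\Psi)(U,\bar U)\ge0$ for every $U$ by the second fact, so $-\Psi$ and $\omega_{FS}$ are both $\ge0$ on $P(E^*)$ and hence so is their sum. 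If $h$ is Griffiths positive and $U\ne0$: when $U^V\ne0$ the $\omega_{FS}$-term is $>0$ and the $-\Psi$-term is $\ge0$; when $U^V=0$ then $U=U^H\ne0$, so $X(v)\ne0$ and $p_*U^H\ne0$, hence the $-\Psi$-term is $>0$ while the $\omega_{FS}$-term is $0$; either way $\sqrt{-1}\partial\bar\partial\log G(U,\bar U)>0$.

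The one step needing care is the identification in the second paragraph of the horizontal restriction of $-\Psi$ with the Griffiths curvature form of $h$ tested against $X(v)$, together with the surjectivity of $v\mapsto X(v)$; but this only unwinds the explicit formula for $-\Psi$ already recorded and the identity $(G_{i\bar j})=(h^{i\bar j})$, so it is contraction bookkeeping rather than a real obstacle. Everything else follows formally from the splitting $TP(E^*)=\mc{H}\oplus\mc{V}$ and the fiberwise positivity of $\omega_{FS}$ given by Proposition \ref{prop1}.
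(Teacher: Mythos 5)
Your proposal is correct and follows essentially the same route as the paper: the decomposition $\sqrt{-1}\p\b{\p}\log G=-\Psi+\omega_{FS}$ from Remark \ref{rem1}(3), the fiberwise positivity of $\omega_{FS}$ from Proposition \ref{prop1}, and the identification of $-\Psi$ on horizontal vectors with the Griffiths curvature form of $h$ via the metric isomorphism $v\mapsto X(v)$. The paper leaves these steps implicit in a one-line deduction, and your writeup simply supplies the horizontal/vertical case check and the contraction bookkeeping that the paper omits.
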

 
\subsection{Maximum principle for real $(1,1)$-forms} 

In this subsection, we recall the maximum principle for real $(1,1)$-forms. For more details, one can refer to \cite{Bando, Chow, Mok, Niu}. The following version maximum principle is same as the tensor maximum
    principle \cite[Theorem 4.6]{Chow}, so we omit its proof.  
    \begin{thm}[{\cite[Theorem 4.6]{Chow}}]\label{thm1}
Let $\omega(t)=\sqrt{-1}g_{\alpha\b{\beta}}(t)dz^{\alpha}\wedge d\b{z}^{\beta}$	be a smooth $1$-parameter family of Hermitian metrics on a compact complex manifold $M$. Let $\eta(t)=\sqrt{-1}\eta_{\alpha\b{\beta}}(t)dz^{\alpha}\wedge d\b{z}^{\beta}$ be a real $(1,1)$-form satisfying 
\begin{align*}
\frac{\p}{\p t}\eta\geq \Delta_{\omega(t)}\eta+\sigma,
\end{align*}
where $\Delta_{\omega(t)}:=g^{\alpha\b{\beta}}\n_{\frac{\p}{\p z^{\alpha}}}\n_{\frac{\p}{\p\b{z}^{\beta}}}$, $\n$ denotes the Chern connection of $\omega(t)$, $\sigma(\omega,t)$ is a real $(1,1)$-form which is locally  Lipschitz in all its arguments and satisfies the {\bf null eigenvector assumption} that 
$$(-\sqrt{-1})\sigma(V,\o{V})(z,t)=(\sigma_{\alpha\b{\beta}}V^\alpha\o{V^\beta})(z,t)\geq 0$$
whenever $V(z,t)=V^{\alpha}\frac{\p}{\p z^{\alpha}}$ is a null eigenvector of $\eta(t)$, that is whenever
$$(\eta_{\alpha\b{\beta}}V^{\alpha})(z,t)=0.$$
If $\eta(0)\geq 0$, then $\eta(t)\geq 0$ for all $t\geq 0$ such that the solution exists.
\end{thm}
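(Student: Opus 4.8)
The plan is to adapt Hamilton's tensor maximum principle to the Hermitian--complex setting, using a barrier built from $\omega(t)$. Fix $T$ below the maximal existence time of the solution; since $M$ is compact, $\omega(t)$, its Chern connection and curvature, and $\sigma$ are all bounded on $M\times[0,T]$, and in particular $\partial_t\omega-\Delta_{\omega(t)}\omega\ge -C_0\,\omega(t)$ there for some $C_0>0$. For $\epsilon>0$ I set $\eta_\epsilon:=\eta+\epsilon e^{Kt}\omega(t)$ with $K:=C_0+1$. Using the evolution inequality for $\eta$ together with $\partial_t(e^{Kt}\omega)=Ke^{Kt}\omega+e^{Kt}\partial_t\omega$ one computes
\[
\frac{\partial}{\partial t}\eta_\epsilon-\Delta_{\omega(t)}\eta_\epsilon\ \ge\ \sigma+\epsilon e^{Kt}\bigl(K\omega+\partial_t\omega-\Delta_{\omega(t)}\omega\bigr)\ \ge\ \sigma+\epsilon e^{Kt}\,\omega(t),
\]
while $\eta_\epsilon(0)=\eta(0)+\epsilon\,\omega(0)>0$. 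It then suffices to show $\eta_\epsilon(t)>0$ on $[0,T]$ for every sufficiently small $\epsilon$: letting $\epsilon\to0$ gives $\eta(t)\ge0$ on $[0,T]$, and $T$ is arbitrary.

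Assume this fails. By compactness of $M$ and continuity of the lowest eigenvalue of $\eta_\epsilon(x,t)$ relative to $\omega(t)$ (positive at $t=0$), there is a first time $t_0\in(0,T]$, a point $x_0\in M$, and a nonzero $V_0\in T^{1,0}_{x_0}M$ with $\eta_\epsilon(x_0,t_0)\ge0$, $(\eta_\epsilon)_{\alpha\bar\beta}(x_0,t_0)V_0^\alpha=0$, while $\eta_\epsilon(x,t)>0$ for all $x\in M$ and all $t<t_0$. I then extend $V_0$ to a local holomorphic vector field $V$ with $V(x_0)=V_0$ whose $1$-jet at $x_0$ is prescribed so that $\nabla_\gamma V^\alpha(x_0)=0$, where $\nabla$ is the Chern connection of $\omega(t_0)$ (the $1$-jet of a holomorphic vector field at a point is free, so this is possible). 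Setting $f(x,t):=(\eta_\epsilon)_{\alpha\bar\beta}(x,t)\,V^\alpha(x)\,\overline{V^\beta(x)}$, one has $f\ge0$ near $x_0$ for $t\le t_0$ and $f(x_0,t_0)=0$, so $(x_0,t_0)$ is a minimum of $f$ over a spacetime neighbourhood with $t\le t_0$; hence at $(x_0,t_0)$, $\partial_tf\le0$, $\partial_\gamma f=0$, and $g^{\gamma\bar\delta}\partial_\gamma\partial_{\bar\delta}f\ge0$.

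The next step is the key computation at $(x_0,t_0)$. Since $f$ is a scalar, $\partial_\gamma\partial_{\bar\delta}f=\nabla_\gamma\nabla_{\bar\delta}f$; expanding this by the Leibniz rule and using that $V$ is holomorphic ($\nabla_{\bar\delta}V=0$, $\nabla_\gamma\bar V=0$), that $\nabla_\gamma V(x_0)=0$ (hence $\nabla_{\bar\delta}\bar V(x_0)=\overline{\nabla_\delta V}(x_0)=0$), and --- crucially --- that $(\eta_\epsilon)_{\alpha\bar\beta}V_0^\alpha=0$ annihilates the one surviving term $(\eta_\epsilon)_{\alpha\bar\beta}V_0^\alpha(\nabla_\gamma\nabla_{\bar\delta}\bar V)^{\bar\beta}$ (curvature contracted against $\bar V_0$, coming from $\nabla\nabla\bar V$), one gets
\[
g^{\gamma\bar\delta}\partial_\gamma\partial_{\bar\delta}f=g^{\gamma\bar\delta}(\nabla_\gamma\nabla_{\bar\delta}\eta_\epsilon)_{\alpha\bar\beta}V_0^\alpha\overline{V_0^\beta}=(\Delta_{\omega(t_0)}\eta_\epsilon)_{\alpha\bar\beta}V_0^\alpha\overline{V_0^\beta}\ \ge\ 0,
\]
together with $\partial_tf(x_0,t_0)=\partial_t(\eta_\epsilon)_{\alpha\bar\beta}\,V_0^\alpha\overline{V_0^\beta}$. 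Contracting the evolution inequality for $\eta_\epsilon$ with $V_0\otimes\overline{V_0}$ and discarding the nonnegative Laplacian term then yields
\[
0\ \ge\ \partial_tf(x_0,t_0)\ \ge\ \sigma_{\alpha\bar\beta}(x_0,t_0)V_0^\alpha\overline{V_0^\beta}+\epsilon e^{Kt_0}|V_0|^2_{\omega(t_0)}.
\]
Finally, $\eta_\epsilon(x_0,t_0)$ is positive semidefinite with $V_0$ in its kernel and differs from $\eta(x_0,t_0)$ by $\epsilon e^{Kt_0}\omega(x_0,t_0)=O(\epsilon)$, so the null eigenvector assumption together with the local Lipschitz dependence of $\sigma$ on its arguments gives $\sigma_{\alpha\bar\beta}(x_0,t_0)V_0^\alpha\overline{V_0^\beta}\ge-C_1\epsilon|V_0|^2_{\omega(t_0)}$ for a constant $C_1$ independent of $\epsilon$. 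Hence $0\ge(e^{Kt_0}-C_1\epsilon)\,\epsilon\,|V_0|^2_{\omega(t_0)}>0$ once $\epsilon<1/C_1$, a contradiction; this proves $\eta_\epsilon(t)>0$ on $[0,T]$ for small $\epsilon$, and the theorem follows.

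I expect the main obstacle to be the Hessian computation just sketched: one must arrange the test vector field so that at the critical spacetime point $g^{\gamma\bar\delta}\partial_\gamma\partial_{\bar\delta}f$ collapses \emph{exactly} to $(\Delta_{\omega(t_0)}\eta_\epsilon)_{\alpha\bar\beta}V_0^\alpha\overline{V_0^\beta}$, with no leftover first-order or curvature terms. This requires the holomorphic extension (to kill $\bar\partial V$ and $\partial\bar V$), the prescribed $1$-jet (to kill $\nabla V$ at $x_0$), and the null eigenvector identity $(\eta_\epsilon)_{\alpha\bar\beta}V_0^\alpha=0$, which is needed precisely to absorb the remaining term arising from $\nabla\nabla\bar V$. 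A secondary technical point is that $V_0$ is a genuine null eigenvector of the barrier $\eta_\epsilon$ rather than of $\eta$ itself, which is handled by the local Lipschitz continuity of $\sigma$ and the passage $\epsilon\to0$; the exponential barrier $\epsilon e^{Kt}\omega(t)$ with $K$ large is what renders the non-K\"ahler terms $\partial_t\omega-\Delta_{\omega(t)}\omega$ harmless.
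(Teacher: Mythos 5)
Your proposal is correct, and at its core it is the same argument as the paper's: evaluate at a first spacetime point where a null eigenvector $V_0$ appears, extend $V_0$ to a field whose first covariant derivatives vanish there (the paper uses parallel translation, you use a holomorphic extension with prescribed $1$-jet --- both work, since each kills the cross terms $\nabla\eta\cdot\nabla V$ and reduces the term $\eta_{\alpha\b\beta}V^\alpha\,\nabla\nabla\o{V}^{\b\beta}$ to something annihilated by the null-eigenvector identity), then compare $\p_t f$ with $\Delta f$ and $\sigma(V_0,\o{V_0})$ at the minimum. Where you genuinely go beyond the paper is the closing step: the paper concludes from $\p_t(\eta(V,\o V))\geq 0$ at the touching point that the quantity ``cannot decrease further,'' which is the standard heuristic but not by itself a proof. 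Your regularization $\eta_\epsilon=\eta+\epsilon e^{Kt}\omega(t)$, together with the observation that $\Delta_{\omega(t)}\omega(t)=0$ for the Chern connection and that $\p_t\omega\geq -C_0\omega$ on a compact time interval, turns this into a strict inequality $0\geq \epsilon(e^{Kt_0}-C_1\epsilon)|V_0|^2_{\omega}>0$ and hence an honest contradiction; the price is that $V_0$ is a null eigenvector of $\eta_\epsilon$ rather than of $\eta$, which you correctly absorb using the local Lipschitz dependence of $\sigma$ on the tensor argument (this is the reading of the hypothesis in the cited source \cite[Theorem 4.6]{Chow}, where $\sigma$ is a function of the tensor and the null-eigenvector condition is imposed on that function). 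In short: same mechanism, but your version closes the gap in the final ``cannot decrease further'' step.
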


\section{A flow over projective bundle}\label{sec2}

\subsection{Definition of the flow}
Let $M$ be a compact complex manifold of dimension $n$. Let $\pi:E\to M$ be a holomorphic vector bundle of rank $r$ over $M$. Let $E^*$ denote the dual bundle of $E$, $\{e^i\}_{i=1}^r$ be a local holomorphic frame of $E^*$. Then 
\begin{align*}
(z, v)=(z^1,\cdots, z^n; v_1,\cdots, v_r)	
\end{align*}
gives a local coordinate system of the complex manifold $E^*$, which represents the point $v^ie_i\in E^*$. For any strongly pseudoconvex complex Finsler metric $G$ on $E^*$, by Remark \ref{rem1}, we have the following decomposition
\begin{align*}
\sqrt{-1}\p\b{\p}\log G=-\Psi+\omega_{FS}(G).	
\end{align*}
By Proposition \ref{prop1}, $\omega_{FS}(G)$ is positive along each fiber of $p: P(E^*)\to M$.

Let 
\begin{align*}
\omega(G)=\sqrt{-1}g_{\alpha\b{\beta}}(G)dz^{\alpha}\wedge d\b{z}^{\beta} 	
\end{align*}
be a horizontal $(1,1)$-form on $P(E^*)$ depending smoothly on the Finsler metric $G$, which is positive on horizontal directions, namely $(g_{\alpha\b{\beta}}(G))$ is a positive definite matrix. 

Then one can define a Hermitian metric on $P(E^*)$ by
\begin{align*}
\Omega:=\omega(G)+\omega_{FS}(G).	
\end{align*}

Let $G_0$ be a strongly pseudoconvex complex Finsler metric on $E^*$. We consider the following flow: 
\begin{align}\label{flow}
\begin{cases}
&\frac{\p}{\p t}\log G=\Delta_{\Omega}\log G,\\
& \omega_{FS}(G)>0,\\
&G(0)=G_0.	
\end{cases}
\end{align}
Here $\Delta_{\Omega}:=\sqrt{-1}\Lambda \p\b{\p}$, $\Lambda$ is the adjoint operator of $\Omega\wedge \bullet$. 

Note that
\begin{align}\label{4.1}
\Delta_{\Omega}\log G=\Lambda\sqrt{-1}\p\b{\p}\log G=\Lambda_{\omega(G)}(-\Psi)+(r-1)	
\end{align}
is a smooth function on $P(E^*)$, then 
\begin{align*}
G(t)=e^{\int^t_0\Delta_{\Omega}\log G dt}G_0, 	
\end{align*}
By Definition \ref{defn1}, one sees that  $G(t)$ is always a complex Finsler metric on $E^*$. Moreover,  $G_0$ is strongly pseudoconvex, i.e. $\omega_{FS}(G_0)>0$, so $G(t)$ is strongly pseudoconvex automatically as $t$ small.

\begin{rem}\label{rem2}
\begin{enumerate}
  \item For the case 
  $$\omega(G)=\sqrt{-1}g_{\alpha\b{\beta}}dz^{\alpha}\wedge d\b{z}^{\beta}$$
  is a fixed K\"ahler metric on $M$, and $G_0=h_0^{i\b{j}}v_i\b{v}_j$ comes from a Hermitian metric $(h_0^{i\b{j}})$ of $E^*$,  then   
  \begin{align}\label{3.1}
  \begin{split}
  	0&=\frac{\p}{\p t}\log G-\Delta_{\Omega}\log G\\
  	&=\frac{1}{G}\frac{\p G}{\p t}+\Lambda_{\omega}\Psi-(r-1)\\
  	&=\frac{v_i\o{v_j}}{G}\left(\frac{\p h^{i\b{j}}(t)}{\p t}+g^{\alpha\b{\beta}}R^{i\b{j}}_{~~\alpha\b{\beta}}-(r-1)h^{\b{j}i}(t)\right),
  \end{split}	
  \end{align}
where $h^{i\b{j}}(t):=\frac{\p^2 G(t)}{\p v_i\p\b{v}_j}$. From above equation, one has
\begin{align*}
	\frac{\p h^{i\b{j}}(t)}{\p t}+g^{\alpha\b{\beta}}\frac{\p^2}{\p v_i\p\b{v}_j}(R^{i\b{j}}_{~~\alpha\b{\beta}}v_i\b{v}_j)-(r-1)h^{\b{j}i}(t)=0.
\end{align*}
Since $h^{i\b{j}}(0)=h_0^{i\b{j}}$ is a Hermitian metric, which is independent of the vertical coordinates  $\{v_i, 1\leq i\leq r\}$, so $h^{i\b{j}}(t)$ is also a Hermitian metric. In fact, by induction, we assume that $(\frac{\p^k h^{i\b{j}}}{\p t^k})|_{t=0}$ is independent of fibers, then  
\begin{align*}
\begin{split}
	\frac{\p^{k+1} h^{i\b{j}}(t)}{\p t^{k+1}}|_{t=0}&=\frac{\p^k}{\p t^k}\left(-g^{\alpha\b{\beta}}\frac{\p^2}{\p v_i\p\b{v}_j}(R^{i\b{j}}_{~~\alpha\b{\beta}}v_i\b{v}_j)+(r-1)h^{\b{j}i}(t)\right)|_{t=0}\\
	&=-g^{\alpha\b{\beta}}\frac{\p^2}{\p v_i\p\b{v}_j}\left((\frac{\p^k}{\p t^k}R^{i\b{j}}_{~~\alpha\b{\beta}})|_{t=0}v_i\b{v}_j\right)+(r-1)(\frac{\p^k}{\p t^k}h^{\b{j}i})|_{t=0}\\
	&=-g^{\alpha\b{\beta}}(\frac{\p^k}{\p t^k}R^{i\b{j}}_{~~\alpha\b{\beta}})|_{t=0}+(r-1)(\frac{\p^k}{\p t^k}h^{\b{j}i})|_{t=0},
	\end{split}
\end{align*}
 which is also independent of fibers, because $\frac{\p^k}{\p t^k}R^{i\b{j}}_{~~\alpha\b{\beta}}$ is the combination by $h^{\b{j}i}$ and $\frac{\p^l}{\p t^l}h^{\b{j}i}, 1\leq l\leq k$. It follows that 
 \begin{align*}
 h^{\b{j}i}(t)=h^{\b{j}i}(0)+\frac{\p h^{i\b{j}}(t)}{\p t}|_{t=0}t+\cdots+\frac{\p^{k} h^{i\b{j}}(t)}{\p t^{k}}|_{t=0}t^k+\cdots	
 \end{align*}
is independent of fibers for small $t$. By Proposition \ref{prop1} and $\omega_{FS}(G)>0$, $h^{\b{j}i}(t)$ is a positive definite matrix, so $(h^{\b{j}i}(t))$ is a Hermitian metric on $E^*$. 

Therefore, (\ref{3.1}) is equivalent to 
\begin{align}\label{3.2}
\frac{\p h^{i\b{j}}(t)}{\p t}+g^{\alpha\b{\beta}}(R^{h^{-1}})^{i\b{j}}_{~~\alpha\b{\beta}}-(r-1)h^{\b{j}i}(t)=0.	
\end{align}
Multiplying by $h_{k\b{j}}$ to both sides of (\ref{3.2}), one has
\begin{align*}
h^{-1}\cdot \frac{\p h}{\p t}+\Lambda R^h+(r-1)I:=h^{\b{j}i}\frac{\p h_{k\b{j}}}{\p t}+g^{\alpha\b{\beta}}(R^h)^i_{k\alpha\b{\beta}}+(r-1)\delta_{k}^i=0,
\end{align*}
which is exactly the Hermitian-Yang-Mills flow \cite{Atiyah, Don1} (see also \cite{Ko3}).

\item For the case of $E=TM$. Let 
\begin{align*}
\omega(G)=\sqrt{-1}g_{\alpha\b{\beta}}dz^{\alpha}\wedge d\b{z}^{\beta},
\end{align*}
where $(g_{\alpha\b{\beta}})$ denotes the inverse of the matrix  $\left(\frac{\p^2 G}{\p v_{\alpha}\p\b{v}_{\beta}}\right)$. Let $G_0=g_0^{\alpha\b{\beta}}v_\alpha\b{v}_\beta$ be a strongly pseudoconvex complex Finsler metric on $T^*M$ induced by the Hermitian metric 
\begin{align*}
\omega_0=\sqrt{-1}(g_0)_{\alpha\b{\beta}}dz^{\alpha}\wedge d\b{z}^{\beta}.
\end{align*}
Similar to the above case, the flow (\ref{flow}) is equivalent to the following {\it Hermitian curvature flow}
\begin{align}\label{KR}
	\frac{\p g_{\gamma\b{\delta}}}{\p t}+g^{\alpha\b{\beta}}R_{\gamma\b{\delta}\alpha\b{\beta}}+(r-1)g_{\gamma\b{\delta}}=0,
\end{align}
which was first given in \cite[(3)]{Streets}, and many results were obtained there on this kind of flows with an arbitrary quadratic tensor $Q$ in the torsion. This flow (\ref{KR})
was also appeared in \cite[(7.11)]{Liu2}, and by \cite[Theorem 7.1]{Liu2} (or \cite[Remark 7.2]{Liu2}), if the initial metric $\omega_0$ is a K\"ahler metric, then this flow is reduced to the usual K\"ahler-Ricci flow (see \cite{Cao}).
\end{enumerate}
\end{rem}

\subsection{Positivity preserving along the flow}

In this subsection, we shall discuss the positivity preserving along the flow (\ref{flow}). We assume that the initial metric $G(0)=G_0$ satisfies
$\sqrt{-1}\p\b{\p}\log G_0\geq 0$, and   will prove 
\begin{prop}\label{Mainprop}
Let $(z_0,[v_0],t_0)$ be a point and time such that $\sqrt{-1}\p\b{\p}\log G\geq 0$ for all $0\leq t<t_0,$
and there is a $(1,0)$-type vector $u$ such that $i_u(\p\b{\p}\log G)(z_0,[v_0],t_0)=0$. Then 
	\begin{align*}
	\frac{\p}{\p t}\left(\p\b{\p}\log G(U,\o{U})\right)=\Delta^H_{\Omega}(\p\b{\p}\log G(U,\o{U}))+\langle R^g(u,\o{u}), -\Psi\rangle_{\Omega}-\left|i_u\p^V\Psi\right|^2_{\Omega}.
\end{align*}
at this point $(z_0, [v_0], t_0)$, where $U$ is defined by (\ref{3.111}), which is a locally extended vector field of $u$, and   
$$\langle \sqrt{-1}R^g(u,\o{u}), -\Psi\rangle_{\Omega}:=(-\Psi)_{\alpha\b{\delta}}g^{\alpha\b{\beta}}g^{\gamma\b{\delta}}R^g_{\gamma\b{\beta}\sigma\b{\tau}}u^\sigma\b{u}^\tau,\quad \left|i_u\p^V\Psi\right|^2_{\Omega}:=(\log G)^{a\b{b}}\p_a\Psi_{\alpha\b{\beta}}\o{\p_b\Psi_{\gamma\b{\tau}}}u^\alpha\o{u}^{\gamma}g^{\tau\b{\beta}}.$$
Here $\Omega=\omega(G)+\omega_{FS}(G)$, $\omega(G)=p^*\omega$, $\omega$ is a K\"ahler metric on $M$  depending on the Finsler metric $G$. 
\end{prop}
\vspace{3mm}
Firstly, let $\epsilon>0$ small enough such that 
\begin{align}
\Omega_{\epsilon}=\omega(G)+\epsilon\sqrt{-1}\p\b{\p}\log G=\sqrt{-1}(g_{\alpha\b{\beta}}-\epsilon\Psi_{\alpha\b{\beta}})dz^{\alpha}\wedge d\b{z}^{\beta}+\epsilon\sqrt{-1}\frac{\p^2\log G}{\p w^a\p\b{w}^b}\delta w^a\wedge \delta\b{w}^b	
\end{align}
is a Hermitian metric on $P(E^*)$, where $\Psi_{\alpha\b{\beta}}$ is given by $\Psi=\sqrt{-1}\Psi_{\alpha\b{\beta}}dz^\alpha\wedge d\b{z}^{\beta}$. Denote by $\n^{\epsilon}$ the Chern connection of the Hermitian metric $\Omega_{\epsilon}$, which is the unique connection preserving the holomorphic structure and the metric $\Omega_{\e}$. 
For any two $(1,0)$-type vector fields $X, Y$ of $P(E^*)$, then
\begin{align}\label{con}
\begin{split}
\n^{\epsilon}_X Y &=\langle \n^{\epsilon}_X Y, \frac{\delta}{\delta z^{\beta}}\rangle_{\epsilon} g^{\b{\beta}\alpha}_{\epsilon}\frac{\delta}{\delta z^{\alpha}}+\langle \n^{\epsilon}_X Y, \frac{\p}{\p w^{b}}\rangle_{\epsilon} \frac{1}{\e}(\log G)^{\b{b}a}\frac{\p}{\p w^{a}}\\
&=	X\langle Y, \frac{\delta}{\delta z^{\beta}}\rangle_{\epsilon} g^{\b{\beta}\alpha}_{\epsilon}\frac{\delta}{\delta z^{\alpha}}-\langle Y, \o{X}(\frac{\delta}{\delta z^{\beta}})\rangle_{\epsilon} g^{\b{\beta}\alpha}_{\epsilon}\frac{\delta}{\delta z^{\alpha}}+X\langle Y^V, \frac{\p}{\p w^{b}}\rangle_{\epsilon}  \frac{1}{\e}(\log G)^{\b{b}a}\frac{\p}{\p w^{a}}\\
&=	X\langle Y, \frac{\delta}{\delta z^{\beta}}\rangle_{\epsilon} g^{\b{\beta}\alpha}_{\epsilon}\frac{\delta}{\delta z^{\alpha}}+\langle Y^V, \o{X}(N^b_{\beta})\frac{\p}{\p w^b}\rangle_{\epsilon} g^{\b{\beta}\alpha}_{\epsilon}\frac{\delta}{\delta z^{\alpha}}+X\langle Y^V, \frac{\p}{\p w^{b}}\rangle_{\epsilon}  \frac{1}{\e}(\log G)^{\b{b}a}\frac{\p}{\p w^{a}},
\end{split}
\end{align}
where $Y^V$ denotes the vertical part of $Y$, $(g^{\b{\beta}\alpha})_{\epsilon}$ denotes the inverse of $g_{\epsilon\alpha\b{\beta}}:=g_{\alpha\b{\beta}}-\epsilon\Psi_{\alpha\b{\beta}}$, $\langle\cdot,\cdot\rangle_{\epsilon}$ is the inner product defined by $\Omega_{\epsilon}$. 

 Denote by $Z^A$ the coordinates $z^\alpha$ or $w^a$, $1\leq A\leq n+r-1$. We rewrite $\Omega_{\epsilon}$ as the following form:
\begin{align}
\Omega_{\epsilon}=\sqrt{-1}\Omega_{\epsilon A\b{B}}dZ^A\wedge d\b{Z}^B.	
\end{align}
In this form, the Chern connection is given by
\begin{align}
\n^{\epsilon}_{\frac{\p}{\p Z^A}}\frac{\p}{\p Z^B}=\Gamma^C_{AB}\frac{\p}{\p Z^C},\quad \Gamma^C_{AB}=\frac{\p \Omega_{\epsilon B\b{D}}}{\p Z^A}\Omega^{\b{D}C}_{\epsilon}.	
\end{align}
Here $(\Omega^{\b{D}C}_{\epsilon})$ denotes the inverse of  the matrix $(\Omega_{\epsilon C\b{D}})$.
The Chern curvature tensor of $\Omega_{\epsilon}$ is defined by 
\begin{align}\label{cur}
\begin{split}
R^{\epsilon}_{A\b{B}C\b{D}}&:=R(\frac{\p}{\p Z^A},\frac{\p}{\p \b{Z}^B},\frac{\p}{\p Z^C},\frac{\p}{\p \b{Z}^D})\\
&:=\langle\frac{\p}{\p Z^A},(\n^{\epsilon}_{\frac{\p}{\p Z^D}}\n^{\epsilon}_{\frac{\p}{\p \b{Z}^C}}-\n^{\epsilon}_{\frac{\p}{\p \b{Z}^C}}\n^{\epsilon}_{\frac{\p}{\p Z^D}}-\n^{\epsilon}_{[\frac{\p}{\p Z^D},\frac{\p}{\p \b{Z}^C}]})\frac{\p}{\p Z^B}\rangle_{\epsilon}\\
&=-\Omega_{\epsilon A\b{E}}\frac{\p\o{\Gamma^E_{DB}}}{\p Z^C}.
\end{split}
\end{align}
The Chern connection $\n^{\epsilon}$ induces a natural connection on the cotangent bundle $T^*P(E^*)$ ( resp. $\o{T^*P(E^*)}$ ) by
\begin{align}
	\n^{\epsilon}_{A}(f_C dZ^C)=(\p_A f_C-\Gamma^B_{AC}f_{B})dZ^C,\quad (\text{resp.} \,\, \n^{\epsilon}_{\b{B}}(f_{\b{D}} d\b{Z}^D)=(\p_{\b{B}} f_{\b{D}}-\o{\Gamma^E_{BD}}f_{\b{E}})dZ^C)
\end{align}
  for any smooth $(1,0)$-form $f_C dZ^C$ (resp. $(0,1)$-form $f_{\b{D}}d\b{Z}^D$), where $\n^{\epsilon}_A:=\n^{\epsilon}_{\frac{\p}{\p Z^A}}$.
   For convenience, we denote 
   \begin{align}
   \n^{\epsilon}_A f_C:=\p_A f_C-\Gamma^B_{AC}f_{B},\quad \n^{\epsilon}_{\b{B}}f_{\b{D}}:=\p_{\b{B}} f_{\b{D}}-\o{\Gamma^E_{BD}}f_{\b{E}}
   \end{align}
and 
\begin{align}\label{3.3}
(\n^{\epsilon}_A f_{C\b{D}})dZ^C\wedge d\b{Z}^D:=\n^{\epsilon}_{\frac{\p}{\p Z^A}}(f_{C\b{D}}dZ^C\wedge d\b{Z}^D),\quad \n^{\epsilon}_A f_{C\b{D}}=\p_A f_{C\b{D}}-\Gamma^B_{AC}f_{B\b{D}}.
\end{align}
By using the above notations, we have 
\begin{align}\label{3.4}
\n^{\epsilon}_A \Omega_{\epsilon C\b{D}}=\p_A\Omega_{\epsilon C\b{D}}-\Gamma^B_{AC}\Omega_{\epsilon B\b{D}}=0.	
\end{align}

By taking $\p\b{\p}$ to the both sides of the first equation of (\ref{flow}), one has
\begin{align}
	\frac{\p}{\p t}\p\b{\p}\log G=\p\b{\p}\Delta_{\Omega}\log G=\p\b{\p}(tr_{\omega(G)}(-\Psi)),
\end{align}
where the last equality follows from (\ref{4.1}). Since $\lim_{\epsilon\to 0}(\omega(G)-\epsilon\Psi)=\omega(G)$, so
\begin{align}\label{flow1}
\begin{split}
\frac{\p}{\p t}\p\b{\p}\log G &=\lim_{\epsilon\to 0}\p\b{\p}(tr_{\omega(G)-\epsilon\Psi}(-\Psi))\\
&=\lim_{\epsilon\to 0}\p\b{\p}\Delta_{\Omega_{\epsilon}}\log G\\
&=\lim_{\epsilon\to 0}\p_C\p_{\b{D}}(\Omega^{A\b{B}}_{\epsilon}\p_A\p_{\b{B}}\log G)dZ^C\wedge d\b{Z}^D.
\end{split}	
\end{align}

Denote $f:=\log G$ and 
$$f_{A\b{B}}:=\p_A\p_{\b{B}}f,\quad f_{A\b{B}C}:=\p_A\p_{\b{B}}\p_C f,\quad f_{A\b{B}C\b{D}}:=\p_A\p_{\b{B}}\p_C\p_{\b{D}} f,\quad etc..$$ 
By (\ref{3.3}) and (\ref{3.4}), one has
\begin{align}\label{3.5}
\begin{split}
\p_C\p_{\b{D}}(\Omega_{\epsilon}^{A\b{B}}\p_A\p_{\b{B}}\log G)&=\p_C\p_{\b{D}}(\Omega_{\epsilon}^{A\b{B}}f_{A\b{B}})=
	\n^{\epsilon}_C\n^{\epsilon}_{\b{D}}(\Omega_{\epsilon}^{AB}f_{A\b{B}})\\
	&=\Omega^{A\b{B}}_{\epsilon}\n^{\epsilon}_C\n^{\epsilon}_{\b{D}}f_{A\b{B}}\\
	&=\Omega_{\epsilon}^{A\b{B}}\n^{\epsilon}_C(f_{A\b{B}\b{D}}-\o{\Gamma^E_{DB}}f_{A\b{E}})\\
	&=\Omega_{\epsilon}^{A\b{B}}(f_{A\b{B}C\b{D}}-\Gamma^{F}_{CA}f_{F\b{B}\b{D}}-\p_C\o{\Gamma^E_{DB}}f_{A\o{E}}\\
	&\quad -\o{\Gamma^E_{DB}}(f_{AC\b{E}}-\Gamma^F_{CA}f_{F\b{E}})).	
	\end{split}
\end{align}
Similarly, 
\begin{align}\label{3.6}
\begin{split}
&\quad \Omega_{\epsilon}^{A\b{B}}(\n^{\epsilon}_A\n^{\epsilon}_{\b{B}}f_{C\b{D}})\\
&=\Omega^{A\b{B}}_{\epsilon}(f_{A\b{B}C\b{D}}-\Gamma^{F}_{AC}f_{F\b{B}\b{D}}-\p_A\o{\Gamma^E_{BD}}f_{C\o{E}}-\o{\Gamma^E_{BD}}(f_{AC\b{E}}-\Gamma^F_{AC}f_{F\b{E}})).	
\end{split}
\end{align}
Combining (\ref{3.5}) with (\ref{3.6}), we obtain
\begin{align}\label{3.7}
\begin{split}
	&\quad \p_C\p_{\b{D}}(\Omega_{\epsilon}^{A\b{B}}f_{A\b{B}})-\Omega_{\epsilon}^{A\b{B}}(\n^{\epsilon}_A\n^{\epsilon}_{\b{B}}f_{C\b{D}})\\
	&=\Omega_{\epsilon}^{A\b{B}}(\Gamma^F_{AC}-\Gamma^F_{CA})f_{F\b{D}\b{B}}+\Omega_{\epsilon}^{A\b{B}}(\o{\Gamma^E_{BD}}-\o{\Gamma^E_{DB}})f_{AC\b{E}}+\Omega_{\epsilon}^{A\b{B}}(\o{\Gamma^E_{DB}}\Gamma^F_{CA}-\o{\Gamma^E_{BD}}\Gamma^F_{AC})f_{F\b{E}}\\
	&\quad+\Omega_{\epsilon}^{A\b{B}}\p_A\o{\Gamma^E_{BD}}f_{C\b{E}}-\Omega_{\epsilon}^{A\b{B}}\p_C\o{\Gamma^E_{DB}}f_{A\b{E}}\\
		&=\Omega_{\epsilon}^{A\b{B}}(\Gamma^F_{AC}-\Gamma^F_{CA})\n_{\b{D}}f_{F\b{B}}+\Omega_{\epsilon}^{A\b{B}}(\o{\Gamma^E_{BD}}-\o{\Gamma^{E}_{DB}})\n_A f_{C\b{E}}\\
	&\quad-\Omega_{\epsilon}^{A\b{B}}\Omega_{\epsilon}^{F\b{E}}R^{\epsilon}_{F\b{D}A\b{B}}f_{C\b{E}}+\Omega_{\epsilon}^{A\b{B}}\Omega_{\epsilon}^{F\b{E}}R^{\epsilon}_{F\b{B}C\b{D}}f_{A\b{E}}.
	\end{split}
	\end{align}

Substituting (\ref{3.7}) into (\ref{flow1}), we have
\begin{align}\label{2.1}
\begin{split}
	&\quad \frac{\p}{\p t}\p\b{\p}\log G=\lim_{\epsilon\to 0}\left(\Omega_{\epsilon}^{A\b{B}}\n^{\epsilon}_A\n^{\epsilon}_{\b{B}}(\p\b{\p}\log G)\right.\\
	&+\left(\Omega_{\epsilon}^{A\b{B}}(\Gamma^F_{AC}-\Gamma^F_{CA})\n_{\b{D}}f_{F\b{B}}+\Omega_{\epsilon}^{A\b{B}}(\o{\Gamma^E_{BD}}-\o{\Gamma^{E}_{DB}})\n_A f_{C\b{E}}\right.\\
	&\left.\left.\quad-\Omega_{\epsilon}^{A\b{B}}\Omega_{\epsilon}^{F\b{E}}R^{\epsilon}_{F\b{D}A\b{B}}f_{C\b{E}}+\Omega_{\epsilon}^{A\b{B}}\Omega_{\epsilon}^{F\b{E}}R^{\epsilon}_{F\b{B}C\b{D}}f_{A\b{E}}\right) dZ^C\wedge d\b{Z}^D\right).
	\end{split}
\end{align}

As in the proof of Theorem \ref{thm1}, we assume that $\sqrt{-1}\p\b{\p}\log G\geq 0$ for all $0\leq t<t_0$, and $(z_0, [v_0], t_0)$ is a point and time, and $u=u^{A}\frac{\p}{\p Z^{A}}$ is a vector such that 
	\begin{align}\label{3.8}
	f_{C\b{D}}u^C(z_0,[v_0],t_0)=(\p_C\p_{\b{D}}\log G)u^{C}(z_0,[v_0],t_0)=0
	\end{align}
and
\begin{align}\label{assume1}
	(\p\b{\p}\log G(W,\o{W}))(z,[v],t)=(\p_C\p_{\b{D}}\log G)W^{C}\o{W^D}(z,[v],t)\geq 0
\end{align}
for all $(z,[v])\in P(E^*)$, $t\in [0,t_0]$, and tangent vectors $W\in T_{(z,[v])} P(E^*)$. This implies that 
\begin{align}\label{3.10}
u=u^\alpha\frac{\delta}{\delta z^{\alpha}}\in q_*\mc{H}.	
\end{align}
Indeed, one may assume that $u=u_1+u_2$, where $u_1=u^\alpha\frac{\delta}{\delta z^{\alpha}}$, $u_2=u^a\frac{\p}{\p w^{a}}$ are the horizontal and vertical parts of $u$ respectively. By (\ref{1.14}) and $\omega_{FS}(G)>0$, one has
\begin{align}
\begin{split}
0&=(\sqrt{-1}\p\b{\p}\log G)(u,\b{u})\\
&=(\sqrt{-1}\p\b{\p}\log G)(u_1+u_2,\o{u_1}+\o{u_2})\\
&=(\sqrt{-1}\p\b{\p}\log G)(u_1,\o{u_1})+\omega_{FS}(G)(u_2,\o{u_2})\\
&\geq \omega_{FS}(G)(u_2,\o{u_2})\geq 0,
\end{split}
\end{align}
and all equalities hold if and only if $u_2=0$, namely $u=u_1$.
From Lemma \ref{lemma1} and (\ref{3.10}), (\ref{3.8}) is equivalent to 
\begin{align}\label{3.30}
(i_{u}\Psi)(z_0,[v_0], t_0)=0.
\end{align}

For any $\epsilon>0$, by parallel translation, one can extend $u$ to a vector field $U_{\epsilon}=U^A_{\epsilon}\frac{\p}{\p Z^A}$  defined in a neighborhood of $(z_0,[v_0],t_0)$ such that $U_{\epsilon}(z_0,[v_0],t_0)=u$ and 
\begin{align}\label{3.9}
\frac{\p U_{\epsilon}}{\p t}(z_0,[v_0],t_0)=0,\quad (\n^{\epsilon} U_{\epsilon})(z_0,[v_0],t_0)=0.	
\end{align}
This can be done by parallel translating $u$ along radial rays with respect to the connection $\n^{\epsilon}$, and then by extending to be independent of time $t$.

We assume that 
$$U_{\epsilon}(z,[v],t_0)=U_{\epsilon}^{\alpha}\frac{\delta}{\delta z^{\alpha}}+U_{\epsilon}^a\frac{\p}{\p w^a}.$$
 By (\ref{con}), one has 
\begin{align}
\n^{\epsilon} U_{\epsilon}= \b{\p}U_{\epsilon}+\left(\p(U_{\epsilon}^{\alpha}g_{\epsilon\alpha\b{\beta}})+U_{\epsilon}^a\p(\o{N^b_{\beta}})\epsilon(\log G)_{a\b{b}}\right)g^{\b{\beta}\gamma}_{\epsilon}\frac{\delta}{\delta z^{\gamma}}+\p(U_{\epsilon}^a(\log G)_{a\b{b}})(\log G)^{\b{b}c}\frac{\p}{\p w^c}.	
\end{align}
So the second equation of (\ref{3.9}) is equivalent to 
\begin{align}\label{4.3}
\begin{cases}
&\b{\p}U^{\alpha}_{\epsilon}=0,\\
&\b{\p}(-U_{\epsilon}^\alpha N^a_{\alpha}+U^{a}_{\epsilon})=0,\\
&\p(U_{\epsilon}^{\alpha}g_{\epsilon\alpha\b{\beta}})+U_{\epsilon}^a\p(\o{N^b_{\beta}})\epsilon(\log G)_{a\b{b}}=0,\\
&\p(U_{\epsilon}^a(\log G)_{a\b{b}})(\log G)^{\b{b}c}=0,\\
\end{cases}
\end{align}
at the point $(z_0,[v_0],t_0)$. By (\ref{3.10}), $U_{\epsilon}^a=0$ at the point $(z_0,[v_0],t_0)$, so (\ref{4.3}) is equivalent to
\begin{align}\label{3.11}
\begin{cases}
&\b{\p}U_{\epsilon}^{\alpha}=0,\\
&\b{\p}U_{\epsilon}^a=u^{\alpha}\b{\p}N^a_{\alpha},\\
&\p U_{\epsilon}^{\alpha}+g_{\epsilon}^{\b{\beta}\alpha}\p g_{\epsilon\gamma\b{\beta}}u^{\gamma}=0,\\
&\p U_{\epsilon}^a=0.\\
\end{cases}
\end{align}
Since $\lim_{\epsilon\to 0}g_{\epsilon \alpha\b{\beta}}=g_{\alpha\b{\beta}}$, so $\lim_{\epsilon\to 0} U_{\epsilon}=U$ which satisfies the following equations:
\begin{align}\label{3.111}
\begin{cases}
&\b{\p}U^{\alpha}=0,\\
&\b{\p}U^a=u^{\alpha}\b{\p}N^a_{\alpha},\\
&\p U^{\alpha}+g^{\b{\beta}\alpha}\p g_{\gamma\b{\beta}}u^{\gamma}=0,\\
&\p U^a=0
\end{cases}
\end{align}
and $\frac{\p U}{\p t}=0$ at the point $(z_0,[v_0],t_0)$.
By (\ref{2.1}), (\ref{3.8}) and (\ref{3.9}), one has at the point $(z_0,[v_0],t_0)$,
\begin{align}\label{4.2}
\begin{split}
	&\frac{\p}{\p t}\left(\p\b{\p}\log G(U,\o{U})\right)=\lim_{\epsilon\to 0}\left(\Omega_{\epsilon}^{A\b{B}}\n^{\epsilon}_A\n^{\epsilon}_{\b{B}}(\p\b{\p}\log G)(u,\o{u})\right.\\
	&+\left(\Omega_{\epsilon}^{A\b{B}}(\Gamma^F_{AC}-\Gamma^F_{CA})\n_{\b{D}}f_{F\b{B}}+\Omega_{\epsilon}^{A\b{B}}(\o{\Gamma^E_{BD}}-\o{\Gamma^{E}_{DB}})\n_A f_{C\b{E}}\right.\\
	&\left.\left.\quad-\Omega_{\epsilon}^{A\b{B}}\Omega_{\epsilon}^{F\b{E}}R_{F\b{D}A\b{B}}f_{C\b{E}}+\Omega_{\epsilon}^{A\b{B}}\Omega_{\epsilon}^{F\b{E}}R^{\epsilon}_{F\b{B}C\b{D}}f_{A\b{E}}\right) u^C\b{u}^D\right)\\
	&=\lim_{\epsilon\to 0}\left(\Omega_{\epsilon}^{A\b{B}}\p_A\p_{\b{B}}(\p\b{\p}\log G(U_{\epsilon},\o{U}_{\epsilon}))\right.\\
	&+\left(\Omega_{\epsilon}^{A\b{B}}(\Gamma^F_{AC}-\Gamma^F_{CA})\n_{\b{D}}f_{F\b{B}}+\Omega_{\epsilon}^{A\b{B}}(\o{\Gamma^E_{BD}}-\o{\Gamma^{E}_{DB}})\n_A f_{C\b{E}}\right.\\
	&\left.\left.\quad+\Omega_{\epsilon}^{A\b{B}}\Omega_{\epsilon}^{F\b{E}}R^{\epsilon}_{F\b{B}C\b{D}}f_{A\b{E}}\right) u^C\b{u}^D\right).
\end{split}	
\end{align}

In order to deal with (\ref{4.2}), we assume that $\omega(G)=p^*\omega$ for some K\"ahler metric $\omega$ on $M$, so
\begin{align}\label{assume}
\Omega_{\epsilon}=\omega(G)+\epsilon\sqrt{-1}\p\b{\p}\log G	
\end{align}
is a K\"ahler metric on $P(E^*)$ for $\epsilon>0$ small enough. Thus, (\ref{4.2}) is reduced to 
\begin{align}\label{3.12}
	\frac{\p}{\p t}\left(\p\b{\p}\log G(U,\o{U})\right)=\lim_{\epsilon\to 0}\left(\Omega_{\epsilon}^{A\b{B}}\p_A\p_{\b{B}}(\p\b{\p}\log G(U_{\epsilon},\o{U}_{\epsilon}))+\Omega_{\epsilon}^{A\b{B}}\Omega_{\epsilon}^{F\b{E}}R^{\epsilon}_{F\b{B}C\b{D}}f_{A\b{E}}u^C\b{u}^D\right).
\end{align}

For the first term in the RHS of (\ref{3.12}), we have 
\begin{align}\label{3.15}
\begin{split}
	\Omega_{\epsilon}^{A\b{B}}\p_A\p_{\b{B}}(\p\b{\p}\log G(U_{\epsilon},\o{U}_{\epsilon})) &=\sqrt{-1}\Lambda \p\b{\p}(\p\b{\p}\log G(U_{\epsilon},\o{U}_{\epsilon}))\\
	&=\Delta^H_{\Omega_{\epsilon}}(\p\b{\p}\log G(U_{\epsilon},\o{U}_{\epsilon}))+\Delta^V_{\Omega_{\epsilon}}(\p\b{\p}\log G(U_{\epsilon},\o{U}_{\epsilon})).
	\end{split}
\end{align}
Here $\Delta^V_{\Omega_{\epsilon}}=\frac{1}{\epsilon}(\log G)^{\b{b}a}\frac{\p^2}{\p w^a\p\b{w}^b}$ is the vertical Laplacian, while
$
	\Delta^H_{\Omega_{\epsilon}} f=g_{\epsilon}^{\alpha\b{\beta}}(\p\b{\p}f)(\frac{\delta}{\delta z^{\alpha}},\frac{\delta}{\delta \b{z}^{\beta}})
$
 is the horizontal Laplacian (see Remark \ref{rem1}). Since $\lim_{\epsilon\to o}g_{\epsilon\alpha\b{\beta}}=g_{\alpha\b{\beta}}$ and $\lim_{\epsilon\to 0}U_{\epsilon}=U$, so 
 \begin{align}\label{2.9}
 \lim_{\epsilon\to 0}\Delta^H_{\Omega_{\epsilon}}(\p\b{\p}\log G(U_{\epsilon},\o{U}_{\epsilon}))=\Delta^H_{\Omega}(\p\b{\p}\log G(U,\o{U})).
 \end{align}
By Remark \ref{rem1} (1), one has
 \begin{align}\label{2.2}
 \begin{split}
 	\Delta^V_{\Omega_{\epsilon}}(\p\b{\p}\log G(U_{\epsilon},\o{U}_{\epsilon}))&=\Delta^V_{\Omega_{\epsilon}}(f_{c\b{d}}U_{\epsilon}^c\o{U}_{\epsilon}^d)+\Delta^V_{\Omega_{\epsilon}}((-\Psi)_{\alpha\b{\beta}}U_{\epsilon}^{\alpha}\o{U}^{\beta}_{\epsilon})\\
 	&=\frac{1}{\epsilon}f^{a\b{b}}\frac{\p^2}{\p w^a\p\b{w}^b}(f_{c\b{d}}U_{\epsilon}^c\o{U}_{\epsilon}^d)+\frac{1}{\epsilon}f^{a\b{b}}\frac{\p^2}{\p w^a\p\b{w}^b}((-\Psi)_{\alpha\b{\beta}}U_{\epsilon}^{\alpha}\o{U}^{\beta}_{\epsilon}).
 	 \end{split}	
 \end{align}
 
 For the first term in the RHS of (\ref{2.2}), by (\ref{3.11}) and $U^c=0$ at the point $(z_0, [v_0], t_0)$, we have  
 \begin{align}\label{2.6}
\frac{1}{\epsilon}f^{a\b{b}}\frac{\p^2}{\p w^a\p\b{w}^b}(f_{c\b{d}}U_{\epsilon}^c\o{U}_{\epsilon}^d)=\frac{1}{\epsilon}f^{a\b{b}}f_{c\b{d}}\p_{\b{b}}U^c_{\epsilon}\p_a \o{U}^d_{\epsilon}=\frac{1}{\epsilon}f^{a\b{b}}f_{c\b{d}} u^{\alpha}\o{u}^{\beta}\p_{\b{b}}N^c_{\alpha}\p_a \o{N}^d_{\beta}.
 \end{align}

The following lemma is actually proved in \cite[(3.46)]{Wan1}. For readers' convenience, we give a proof here.
\begin{lemma}\label{lemma3}
	\begin{align}
	f^{\b{b}a}\frac{\p^2}{\p w^a\p\b{w}^b}(-\Psi)_{\alpha\b{\beta}}=\p\b{\p}\log\det(f_{a\b{b}})(\frac{\delta}{\delta z^{\alpha}},\frac{\delta}{\delta \b{z}^{\beta}})-\langle\b{\p}^V\frac{\delta}{\delta z^{\alpha}}, \b{\p}^V\frac{\delta}{\delta z^{\beta}}\rangle,	
	\end{align}
	where $\b{\p}^V\frac{\delta}{\delta z^{\alpha}}:=\frac{\p}{\p \b{w}^d}(-f_{\alpha\b{b}}f^{\b{b}c})\delta \b{w}^d\otimes \frac{\p}{\p w^c}$ and $\langle\b{\p}^V\frac{\delta}{\delta z^{\alpha}}, \b{\p}^V\frac{\delta}{\delta z^{\beta}}\rangle:=f^{\b{b}a}\p_{\b{b}}N_{\alpha}^c\p_a\o{N}^d_{\beta}f_{c\b{d}}$.
\end{lemma}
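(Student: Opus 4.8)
The plan is to turn the identity into a finite computation with $f:=\log G$ and its fibre derivatives, and then to match terms. First I would record the closed form of $(-\Psi)_{\alpha\b\beta}$ in the coordinates $(z^{\alpha},w^{a})$. By Lemma \ref{lemma1} together with (\ref{vertical form}) we have $\sqrt{-1}\p\b\p f=-\Psi+\omega_{FS}$ on $P(E^{*})$; since $\omega_{FS}$ annihilates the horizontal distribution (because $\delta w^{a}(\tfrac{\delta}{\delta z^{\beta}})=0$) while $dz^{\gamma}(\tfrac{\delta}{\delta z^{\alpha}})=\delta^{\gamma}_{\alpha}$, evaluating on $\tfrac{\delta}{\delta z^{\alpha}},\tfrac{\delta}{\delta\b z^{\beta}}$ yields the Schur--complement formula
\[
(-\Psi)_{\alpha\b\beta}=f_{\alpha\b\beta}-f_{\alpha\b b}f^{\b ba}f_{a\b\beta}=f_{\alpha\b\beta}-N^{a}_{\alpha}f_{a\b\beta},\qquad N^{a}_{\alpha}=f_{\alpha\b b}f^{\b ba}.
\]
This is the only input from earlier in the paper that is really needed.

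Next I would apply the fibrewise Laplacian $f^{\b ba}\p_{a}\p_{\b b}$ (which is $\Delta^{V}$, Remark \ref{rem1}(2)) to this expression and expand by the Leibniz rule, using throughout the inverse--matrix derivative identity $\p_{a}f^{\b cd}=-f^{\b ce}f_{e\b ga}f^{\b gd}$ and its conjugate, and the symmetry of iterated partials. In parallel, set $\phi:=\log\det(f_{a\b b})$; a change of local coordinates alters $\phi$ only by a pluriharmonic term, so $\sqrt{-1}\p\b\p\phi$ is globally defined on $P(E^{*})$, and since $dz^{\gamma}(\tfrac{\delta}{\delta z^{\alpha}})=\delta^{\gamma}_{\alpha}$, $dw^{c}(\tfrac{\delta}{\delta z^{\alpha}})=-N^{c}_{\alpha}$, one has
\[
\p\b\p\phi\big(\tfrac{\delta}{\delta z^{\alpha}},\tfrac{\delta}{\delta\b z^{\beta}}\big)=\phi_{\alpha\b\beta}-N^{c}_{\alpha}\phi_{c\b\beta}-\overline{N^{d}_{\beta}}\phi_{\alpha\b d}+N^{c}_{\alpha}\overline{N^{d}_{\beta}}\phi_{c\b d},
\]
which I would expand with Jacobi's formula $\p_{a}\phi=f^{\b cb}f_{b\b ca}$ and the same inverse--derivative identities. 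The final step is to subtract the two expansions: the fourth--order terms $f^{\b ba}f_{\alpha\b\beta a\b b}$ and the various "two third--order" terms cancel against their counterparts once the mixed--partial symmetries and one more use of the inverse--derivative identity are invoked, and the only surviving contribution is the purely vertical piece $-f^{\b ba}\p_{\b b}N^{c}_{\alpha}\,\p_{a}\overline{N^{d}_{\beta}}\,f_{c\b d}$, which is by definition $-\langle\b\p^{V}\tfrac{\delta}{\delta z^{\alpha}},\b\p^{V}\tfrac{\delta}{\delta z^{\beta}}\rangle$.

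I expect the main difficulty to be organizational rather than conceptual: each side produces on the order of a dozen third-- and fourth--order terms, and an unstructured expansion is error-prone. To keep it tractable I would fix a point of $P(E^{*})$ and choose holomorphic fibre coordinates that are K\"ahler--normal for the fibre metric $\omega_{FS}|_{\mathrm{fibre}}$ --- legitimate, since this metric equals $\sqrt{-1}\p^{V}\b\p^{V}$ of a function on the fibre, hence is K\"ahler --- so that $f_{a\b b}=\delta_{ab}$ and all first fibre derivatives of $f_{a\b b}$ and of $f^{\b ba}$ vanish at that point. This annihilates most cross terms and leaves only $\p_{\b b}N^{c}_{\alpha}=\p_{\b b}f_{\alpha\b c}$, $\p_{a}\overline{N^{d}_{\beta}}$ and a handful of genuine fourth derivatives to track, after which the cancellation is immediate. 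This is exactly the bookkeeping carried out in \cite[(3.46)]{Wan1}; conceptually the identity is the pointwise, fibrewise shadow of the curvature formula for the pushforward of $\mathcal{O}_{P(E^{*})}(1)$ along $p$ (in the spirit of Berndtsson and Schumacher), with $\b\p^{V}\tfrac{\delta}{\delta z^{\alpha}}$ playing the role of the Kodaira--Spencer class of the fibration, which predicts the shape of the answer but does not shortcut the computation.
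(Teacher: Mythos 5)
Your proposal is correct and follows essentially the same route as the paper: derive the Schur--complement expression $(-\Psi)_{\alpha\b{\beta}}=f_{\alpha\b{\beta}}-f_{\alpha\b{d}}f^{\b{d}c}f_{c\b{\beta}}$ from the decomposition $\sqrt{-1}\p\b{\p}\log G=-\Psi+\omega_{FS}$, then compute $f^{\b{b}a}\p_a\p_{\b{b}}$ of it at a point in fibrewise normal coordinates with $f_{a\b{b}}=\delta_{ab}$, $f_{a\b{b}c}=0$, and recognize the surviving terms as $\p\b{\p}\log\det(f_{a\b{b}})$ on horizontal vectors minus the $\langle\b{\p}^V\frac{\delta}{\delta z^{\alpha}},\b{\p}^V\frac{\delta}{\delta z^{\beta}}\rangle$ piece. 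This is exactly the paper's computation (including the choice of normal coordinates), so no further comparison is needed.
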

\begin{proof}
Let $(-\Psi)_{\alpha\b{\beta}}$ denote the coefficient of $-\Psi$, i.e. $-\Psi=\sqrt{-1}(-\Psi)_{\alpha\b{\beta}}dz^{\alpha}\wedge d\b{z}^{\beta}$, then  
\begin{align}\label{2.333}
(-\Psi)_{\alpha\b{\beta}}=f_{\alpha\b{\beta}}-f_{\alpha\b{d}}f^{\b{d}c}f_{c\b{\beta}}.
\end{align}
In fact, by the decomposition (\ref{1.14}), one has
\begin{align*}
(-\Psi)_{\alpha\b{\beta}}&=(-\sqrt{-1})(-\Psi)(\frac{\delta}{\delta z^{\alpha}},\frac{\delta}{\delta \b{z}^{\beta}})\\
&=(\p\b{\p}f)(\frac{\p}{\p z^{\alpha}}-f_{\alpha\b{b}}f^{\b{b}a}\frac{\p}{\p w^a}, \frac{\p}{\p \b{z}^{\beta}}-f_{\b{\beta}a}f^{a\b{b}}\frac{\p}{\p\b{w}^b})\\
&=f_{\alpha\b{\beta}}-f_{\alpha\b{d}}f^{\b{d}c}f_{c\b{\beta}},
\end{align*}
which proves (\ref{2.333}).

For any fixed point $(z,[v])\in P(E^*)|_{z}$, $z\in M$, we take normal coordinates near $(z,[v])$ such that $f_{a\b{b}}(z,[v])=\delta_{ab}$, $f_{a\b{b}c}(z,[v])=0$. Evaluating at $(z,[v])$ we see that
\begin{align*}
\begin{split}
f^{\b{b}a}\frac{\p^2}{\p w^a\p\b{w}^b}(-\Psi)_{\alpha\b{\beta}}&= f^{\b{b}a}\frac{\p^2}{\p w^a\p\b{w}^b}(f_{\alpha\b{\beta}}-f_{\alpha\b{d}}f^{\b{d}c}f_{c\b{\beta}})\\
&=f^{\b{b}a}\left(f_{a\b{b}\alpha\b{\beta}}-f_{\alpha\b{c}a\b{b}}f_{c\b{\beta}}-f_{\alpha\b{c}}f_{c\b{\beta}a\b{b}}+f_{\alpha\b{d}}f_{a\b{b}d\b{c}}f_{c\b{\beta}}-f_{\alpha\b{c}a}f_{c\b{\beta}\b{b}}-f_{\alpha\b{c}\b{b}}f_{c\b{\beta}a}\right)\\
&=f^{\b{b}a}(-\b{\p}(\p f_{a\b{d}}f^{\b{d}c})f_{c\b{b}})(\frac{\delta}{\delta z^{\alpha}},\frac{\delta}{\delta \b{z}^{\beta}})-f^{\b{b}a}\p_{\b{b}}(-f_{\alpha\b{c}}f^{\b{c}d})\p_a(-f_{k\b{\beta}}f^{k\b{l}})f_{d\b{l}}\\
&=\p\b{\p}\log\det(f_{a\b{b}})(\frac{\delta}{\delta z^{\alpha}},\frac{\delta}{\delta \b{z}^{\beta}})-\langle\b{\p}^V\frac{\delta}{\delta z^{\alpha}}, \b{\p}^V\frac{\delta}{\delta z^{\beta}}\rangle.
\end{split}
\end{align*}
which completes the proof. 
\end{proof}
By Lemma \ref{lemma3} and (\ref{3.11}), one has
\begin{align}\label{2.3}
\begin{split}
	&\quad \frac{1}{\epsilon}f^{a\b{b}}\frac{\p^2}{\p w^a\p\b{w}^b}((-\Psi)_{\alpha\b{\beta}}U_{\epsilon}^{\alpha}\o{U}^{\beta}_{\epsilon})\\
	&=\frac{1}{\epsilon}f^{a\b{b}}\left(\p_a\p_{\b{b}}(-\Psi)_{\alpha\b{\beta}}u^{\alpha}\b{u}^{\beta}+\p_a(-\Psi)_{\alpha\b{\beta}}\p_{\b{b}}\o{U}^{\beta}_{\epsilon}u^{\alpha}+\p_{\b{b}}(-\Psi)_{\alpha\b{\beta}}\p_aU^{\alpha}_{\epsilon}\o{u}^{\beta}+\p_aU_{\epsilon}^{\alpha}\o{\p_b U_{\epsilon}^b}(-\Psi)_{\alpha\b{\beta}}\right)\\
	&=\frac{1}{\epsilon}\p\b{\p}\log\det(f_{a\b{b}})(u,\b{u})-\frac{1}{\epsilon}f^{a\b{b}}f_{c\b{d}} u^{\alpha}\o{u}^{\beta}\p_{\b{b}}N^c_{\alpha}\p_a \o{N}^d_{\beta}-2f^{a\b{b}}\p_a\Psi_{\alpha\b{\beta}}\o{\p_b\Psi_{\gamma\b{\tau}}}u^\alpha\o{u}^{\gamma}g^{\tau\b{\beta}}+O(\epsilon),
	\end{split}
\end{align}
where the last equality follows from $\p_a U^{\alpha}_{\epsilon}=-g^{\b{\beta}\alpha}_{\epsilon}\p_a g_{\epsilon\gamma\b{\beta}}u^{\gamma}=\epsilon g^{\b{\beta}\alpha}_{\epsilon}\p_a\Psi_{\gamma\b{\beta}}u^\gamma=O(\epsilon)$.

Substituting (\ref{2.6}) and (\ref{2.3}) into (\ref{2.2}), we obtain
\begin{align}\label{2.8}
	\Delta^V_{\Omega_{\epsilon}}(\p\b{\p}\log G(U_{\epsilon},\o{U}_{\epsilon}))=\frac{1}{\epsilon}\p\b{\p}\log\det(f_{a\b{b}})(u,\b{u})-2\left|i_u\p^V\Psi\right|^2_{\Omega}+O(\epsilon).
\end{align}
Here we denote $\left|i_u\p^V\Psi\right|^2_{\Omega}:=f^{a\b{b}}\p_a\Psi_{\alpha\b{\beta}}\o{\p_b\Psi_{\gamma\b{\tau}}}u^\alpha\o{u}^{\gamma}g^{\tau\b{\beta}}$.

For the second term in the RHS of (\ref{3.12}), we have
\begin{align}\label{3.21}
	\Omega_{\epsilon}^{A\b{B}}\Omega_{\epsilon}^{F\b{E}}R^{\epsilon}_{F\b{B}C\b{D}}f_{A\b{E}}u^C\b{u}^D=(-\Psi)_{\alpha\b{\delta}}g^{\alpha\b{\beta}}_{\epsilon}g^{\gamma\b{\delta}}_{\epsilon}R^{\epsilon}_{\gamma\b{\beta}\sigma\b{\tau}}u^\sigma\b{u}^\tau+\frac{1}{\epsilon^2}f^{a\b{b}}R^{\epsilon}_{a\b{b}\sigma\b{\tau}}u^{\sigma}\b{u}^{\tau},
\end{align}
where $R^{\epsilon}_{\gamma\b{\beta}\sigma\b{\tau}}=R^{\epsilon}(\frac{\delta}{\delta z^\gamma}, \frac{\delta}{\delta \b{z}^\beta},\frac{\delta}{\delta z^\sigma}\frac{\delta}{\delta \b{z}^\tau})$ and $R^{\epsilon}_{a\b{b}\sigma\b{\tau}}=R^{\epsilon}(\frac{\p}{\p w^a}, \frac{\p}{\p\b{w}^b},\frac{\delta}{\delta z^\sigma}, \frac{\delta}{\delta \b{z}^\tau})$.
By (\ref{con}) and (\ref{cur}), one has
\begin{align}\label{3.22}
\begin{split}
R^{\epsilon}_{\gamma\b{\beta}\sigma\b{\tau}}&=R^{\epsilon}(\frac{\delta}{\delta z^\gamma}, \frac{\delta}{\delta \b{z}^\beta},\frac{\delta}{\delta z^\sigma}, \frac{\delta}{\delta \b{z}^\tau})\\
&=\langle \frac{\delta}{\delta z^\gamma},(\n^{\epsilon}_{\frac{\delta}{\delta z^\tau}}\n^{\epsilon}_{\frac{\delta}{\delta \b{z}^\sigma}}-\n^{\epsilon}_{\frac{\delta}{\delta \b{z}^\sigma}}\n^{\epsilon}_{\frac{\delta}{\delta z^\tau}}-\n^{\epsilon}_{[\frac{\delta}{\delta z^\tau},\frac{\delta}{\delta \b{z}^\sigma}]})\frac{\delta}{\delta z^\beta}\rangle_{\epsilon}\\
&=\left(\b{\p}(\p g_{\epsilon\gamma\b{\delta}}\cdot g_{\e}^{\b{\delta}\alpha})g_{\e\alpha\b{\beta}}\right)(\frac{\delta}{\delta z^{\sigma}},\frac{\delta}{\delta\b{z}^{\tau}})-\epsilon f_{c\b{d}}\frac{\delta}{\delta z^{\sigma}}\o{N}^d_{\beta}\frac{\delta}{\delta\b{z}^{\tau}}N^c_{\gamma}\\
&=R^{g}_{\gamma\b{\beta}\sigma\b{\tau}}+O(\e),
\end{split}
\end{align}
where $R^{g}_{\gamma\b{\beta}\sigma\b{\tau}}:=-\frac{\p^2 g_{\gamma\b{\beta}}}{\p z^{\sigma}\p\b{z}^{\tau}}+g^{\b{\tau}\alpha}\frac{\p g_{\alpha\b{\beta}}}{\p\b{z}^{\tau}}\frac{\p g_{\gamma\b{\delta}}}{\p z^{\sigma}}$
denotes the Chern curvature of the K\"ahler metric $\omega=\sqrt{-1}g_{\alpha\b{\beta}}dz^{\alpha}\wedge d\b{z}^{\beta}$. And 
\begin{align}\label{3.23}
\begin{split}
	\frac{1}{\e^2}f^{a\b{b}}R^{\epsilon}_{a\b{b}\sigma\b{\tau}}u^{\sigma}\o{u}^{\tau}&=\frac{1}{\e^2}f^{a\b{b}}R^{\e}(\frac{\p}{\p w^a}, \frac{\p}{\p\b{w}^b},\frac{\delta}{\delta z^\sigma}, \frac{\delta}{\delta \b{z}^\tau})u^{\sigma}\o{u}^{\tau}\\
	&=\frac{1}{\epsilon^2}f^{a\b{b}}\langle \frac{\p}{\p w^a},(\n^{\e}_{\frac{\delta}{\delta z^\tau}}\n^{\e}_{\frac{\delta}{\delta \b{z}^\sigma}}-\n^{\e}_{\frac{\delta}{\delta \b{z}^\sigma}}\n^{\e}_{\frac{\delta}{\delta z^\tau}}-\n^{\e}_{[\frac{\delta}{\delta z^\tau},\frac{\delta}{\delta \b{z}^\sigma}]})\frac{\p}{\p w^b }\rangle_{\epsilon} u^{\sigma}\o{u}^{\tau}\\
	&=\frac{1}{\e}f^{a\b{b}}\left(\b{\p}(\p f_{a\b{d}}\cdot f^{\b{d}c})f_{c\b{b}}\right)(u,\o{u})+\frac{\delta}{\delta z^{\sigma}}\o{N}^{b}_{\beta}\frac{\delta}{\delta\b{z}^{\tau}}N^a_{\alpha}f_{a\b{b}}g^{\b{\beta}\alpha}_{\epsilon}u^{\sigma}\o{u}^{\tau}\\
	&=-\frac{1}{\e}\p\b{\p}\log\det(f_{a\b{b}})(u,\o{u})+\left|i_u\p^V\Psi\right|^2_{\Omega}+O(\epsilon),
\end{split}	
\end{align}
where the last equality follows from the following equalities:
\begin{align*}
\frac{\delta}{\delta\b{z}^{\gamma}}N^a_\alpha f_{a\b{b}}&=(\p_{\b{\gamma}}-f_{c\b{\gamma}}f^{c\b{e}}\p_{\b{e}})(f_{\alpha\b{d}}f^{\b{d}a})f_{a\b{b}}\\
&=	f_{a\b{b}\b{\gamma}}-f_{\alpha\b{d}}f^{\b{d}a}f_{a\b{b}\b{\gamma}}-f_{c\b{\gamma}}f^{c\b{e}}f_{\b{e}\alpha\b{b}}+f_{c\b{\gamma}}f^{c\b{e}}f_{\alpha\b{d}}f^{\b{d}a}f_{a\b{b}\b{e}}\\
&=\p_{\b{b}}(f_{\alpha\b{\gamma}}-f_{\alpha\b{b}}f^{\b{b}a}f_{a\b{\gamma}})\\
&=\p_{\b{b}}(-\Psi)_{\alpha\b{\gamma}}.
\end{align*}
Substituting (\ref{3.22}) and (\ref{3.23}) into (\ref{3.21}), we have 
\begin{align}\label{2.7}
	\Omega_{\epsilon}^{A\b{B}}\Omega_{\epsilon}^{F\b{E}}R^{\epsilon}_{F\b{B}C\b{D}}f_{A\b{E}}u^C\b{u}^D=-\frac{1}{\e}\p\b{\p}\log\det(f_{a\b{b}})(u,\o{u})+\left|i_u\p^V\Psi\right|^2_{\Omega}+\langle R^g(u,\o{u}), -\Psi\rangle_{\Omega}+O(\epsilon).
\end{align}
Here we denote $\langle \sqrt{-1}R^g(u,\o{u}), -\Psi\rangle_{\Omega}=(-\Psi)_{\alpha\b{\delta}}g^{\alpha\b{\beta}}g^{\gamma\b{\delta}}R^g_{\gamma\b{\beta}\sigma\b{\tau}}u^\sigma\b{u}^\tau$.

Substituting (\ref{2.9}), (\ref{2.8}) and  (\ref{2.7}) into (\ref{3.12}), we obtain
\begin{align}\label{3.24}
	\frac{\p}{\p t}\left(\p\b{\p}\log G(U,\o{U})\right)=\Delta^H_{\Omega}(\p\b{\p}\log G(U,\o{U}))+\langle \sqrt{-1}R^g(u,\o{u}), -\Psi\rangle_{\Omega}-\left|i_u\p^V\Psi\right|^2_{\Omega}.
\end{align}at the point $(z_0, [v_0], t_0)$, which completes the proof of Proposition \ref{Mainprop}.

Now we define a horizontal and real $(1,1)$-form $T$ as follow,
\begin{align}\label{defnT}
(-\sqrt{-1})T(X,\o{X}):=\langle \sqrt{-1}R^g(X,\o{X}), -\Psi\rangle_{\Omega}-\left|i_X\p^V\Psi\right|^2_{\Omega}	
\end{align}
for any horizontal vector $X=X^\alpha\frac{\delta}{\delta z^{\alpha}}$.
And we assume that  $T$ satisfies the null eigenvector assumption (see Theorem \ref{thm1}), by Theorem \ref{thm1}, we obtain
\begin{thm}\label{thm2}
Let $\pi: (E^*, G_0)\to M$ be a  holomorphic Finsler vector bundle over $M$ with $\sqrt{-1}\p\b{\p}\log G_0\geq 0$. Consider the following flow over the projective bundle $p:P(E^*)\to M$: 
\begin{align}\label{flow2}
\begin{cases}
&\frac{\p}{\p t}\log G=\Delta_{\Omega}\log G,\\
& \omega_{FS}(G)>0,\\
&G(0)=G_0,	
\end{cases}
\end{align}
where $\Omega=\omega(G)+\omega_{FS}(G)$, $\omega(G)=p^*\omega$, $\omega$ is a K\"ahler metric on $M$  depending on the Finsler metric $G$. 
If the horizontal $(1,1)$-form $T$ satisfies the null eigenvector assumption, then 
\begin{align*}
\sqrt{-1}\p\b{\p}\log G(t)\geq 0
\end{align*}
for all $t\geq 0$ such that the solution exists.
\end{thm}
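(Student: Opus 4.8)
\textbf{Proof proposal for Theorem \ref{thm2}.}

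The plan is to apply the maximum principle for real $(1,1)$-forms (Theorem \ref{thm1}) to the $(1,1)$-form $\eta(t):=\sqrt{-1}\p\b\p\log G(t)$ on the compact complex manifold $P(E^*)$, with the evolving Hermitian metric $\Omega(t)=\omega(G)+\omega_{FS}$. Most of the work has already been set up in the body of the section: the evolution equation $\frac{\p}{\p t}\p\b\p\log G=\p\b\p\Delta_\Omega\log G$ has been turned, via the $\epsilon$-regularization $\Omega_\epsilon=\omega(G)+\epsilon\sqrt{-1}\p\b\p\log G$ and a Bochner-type commutation, into the pointwise identity (\ref{3.24}) at a first contact point $(z_0,[v_0],t_0)$ where $\eta$ first acquires a null eigenvector $u$. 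So the remaining steps are essentially bookkeeping: (i) assume for contradiction that $\eta(0)\ge0$ but $\eta$ fails to stay nonnegative, pick the first time $t_0$ and the null direction $u$ at $(z_0,[v_0])$; (ii) observe, as already shown in (\ref{3.10}), that any null eigenvector of $\eta$ is automatically horizontal because $\omega_{FS}>0$ contributes strictly positively in the vertical directions; (iii) parallel-transport $u$ (with respect to $\n^\epsilon$, radially in space and constant in $t$) to get the local vector field $U_\epsilon\to U$, which is what makes the first-derivative and connection terms drop out at the contact point; (iv) read off (\ref{3.24}): $\frac{\p}{\p t}(\eta(U,\o U))=\Delta^H_\Omega(\eta(U,\o U))+(-\sqrt{-1})T(u,\o u)$ at $(z_0,[v_0],t_0)$.

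Next I would handle the three terms on the right of (\ref{3.24}) exactly as in the proof of Theorem \ref{thm1}. Since $\eta\ge0$ on $P(E^*)$ for all $t\in[0,t_0]$ and $\eta(U,\o U)$ vanishes at the contact point, that function attains an interior spatial minimum there, so $\Delta^H_\Omega(\eta(U,\o U))\ge0$ — here one should note $\Delta^H_\Omega$ is the horizontal Laplacian with respect to the (positive) metric $\omega$, and that the relevant second-order operator at a minimum of a nonnegative function is nonnegative even though $\Delta^H_\Omega$ is not the full Laplacian (the vertical part was already absorbed: it produced the $\frac1\epsilon\p\b\p\log\det(f_{a\b b})$ terms that canceled between (\ref{2.8}) and (\ref{2.7})). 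For the curvature term, the null eigenvector assumption on $T$ is invoked precisely here: $u$ is a horizontal $(1,0)$ vector with $i_u\eta=0$ and $\eta\ge0$, so by hypothesis $(-\sqrt{-1})T(u,\o u)\ge0$. Hence $\frac{\p}{\p t}(\eta(U,\o U))\ge0$ at $(z_0,[v_0],t_0)$, which contradicts the assumption that $\eta(U,\o U)$ was about to become negative. Therefore $\eta(t)\ge0$ for all $t\ge0$ for which the solution exists.

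Strictly speaking I would also add the standard $\delta$-perturbation to make the argument rigorous: replace $\eta$ by $\eta+\delta(1+t)\Omega$ (or work with $\eta_\delta$ satisfying $\frac{\p}{\p t}\eta_\delta\ge\Delta^H_\Omega\eta_\delta+(-\sqrt{-1})T+\delta(\cdots)$ with a strictly positive lower-order term), run the argument to conclude $\eta_\delta>0$, and let $\delta\to0$; this is exactly the device used implicitly in Theorem \ref{thm1} to pass from "cannot decrease further" to genuine preservation, and it also takes care of the mild issue that $T$ is only assumed Lipschitz. I expect the main obstacle — and the only genuinely delicate point — to be verifying that the limit $\epsilon\to0$ in (\ref{4.2})–(\ref{3.24}) is legitimate uniformly near $(z_0,[v_0],t_0)$, i.e.\ that the $O(\epsilon)$ remainders in (\ref{2.8}) and (\ref{2.7}) really are controlled and that $U_\epsilon\to U$ in $C^2$ on a fixed neighborhood, so that differentiating through the limit is justified; once this is granted, the identity (\ref{3.24}) is exact and the maximum-principle argument of Theorem \ref{thm1} applies verbatim. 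Everything else — the horizontality of null eigenvectors, the sign of the horizontal Laplacian term, and the use of the null eigenvector assumption for $T$ — is routine given the preparatory computations already in place.
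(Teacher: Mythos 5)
Your proposal follows essentially the same route as the paper: the paper's proof of Theorem \ref{thm2} consists precisely of the $\epsilon$-regularization computation culminating in the identity (\ref{3.24}) at a first contact point (with the null eigenvector shown to be horizontal via $\omega_{FS}>0$), followed by an application of the tensor maximum principle of Theorem \ref{thm1} using the null eigenvector assumption on $T$. Your additional remarks on the $\delta$-perturbation and on justifying the $\epsilon\to 0$ limit are reasonable refinements of the same argument rather than a different approach.
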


\section{Applications}\label{sec3}
In this section, we will give two applications of Theorem \ref{thm2}.

\subsection{The case of curve}

In this subsection, we consider the case of $\dim M=1$, i.e. $M$ is a curve. In this case, any Hermitian metric $$\omega=\sqrt{-1}gdz\wedge d\b{z}$$ on $M$ is K\"ahler automatically. The Gaussian curvature is then given by 
\begin{align}
K=-\frac{1}{g}\frac{\p^2}{\p z\p\b{z}}\log g=\frac{1}{g^2}R^g(\frac{\p}{\p z},\frac{\p}{\p \b{z}},\frac{\p}{\p z},\frac{\p}{\p \b{z}})=:\frac{1}{g^2}R^g_{z\b{z}z\b{z}}.	
\end{align}
Now we assume that 
\begin{align}
\Omega=p^*\omega+\omega_{FS}(G),	
\end{align}
where $\omega=\omega(G)$ is a metric on $M$ depending smoothly on the Finsler metric $G$. Then at the point $(z_0,[v_0], t_0)$, by (\ref{3.111}), one has
\begin{align}
\begin{split}
(-\sqrt{-1})T(u,\o{u})&=\langle \sqrt{-1}R^g(u,\o{u}), -\Psi\rangle_{\Omega}-\left|i_u\p^V\Psi\right|^2_{\Omega}	\\
&=(-\Psi)_{z\b{z}}g^{-2}R^g_{z\b{z}z\b{z}}|u|^2-f^{a\b{b}}\p_a\Psi_{z\b{z}}\o{\p_b\Psi_{z\b{z}}}|u|^2g^{-1}\\
&=K\cdot\sqrt{-1}\Psi(u,\o{u})-f^{a\b{b}}\left(\p_a(-\sqrt{-1}\Psi(U,\o{U}))+\sqrt{-1}\Psi(\p_a U, u)\right)\o{\p_b\Psi_{z\b{z}}}g^{-1}\\
&=0,
\end{split}
\end{align}
since $i_u\Psi=0$ and $\sqrt{-1}\Psi(U,\o{U})$ attains its local minimal value at the point $(z_0,[v_0], t_0)$. Therefore, we prove
\begin{prop}\label{prop3}
If $M$ is a curve, then the semipositivity of the curvature of $\mc{O}_{P(E^*)}(1)$ is preserved along the flow (\ref{flow2}).	
\end{prop}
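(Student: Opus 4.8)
The statement will follow from Theorem \ref{thm2} once we verify that, when $\dim M=1$, the horizontal $(1,1)$-form $T$ of (\ref{defnT}) satisfies the null eigenvector assumption. The plan is thus in three steps: (i) observe that on a curve every Hermitian metric is K\"ahler, so the standing hypothesis of Theorem \ref{thm2} that $\omega(G)=p^*\omega$ with $\omega$ K\"ahler is no restriction and the evolution identity (\ref{3.24}) is available; (ii) check the null eigenvector condition by an explicit computation at the first degeneracy time; (iii) invoke Theorem \ref{thm2}, together with Remark \ref{rem1}, which identifies $\sqrt{-1}\p\b{\p}\log G$ as a curvature form of $\mc O_{P(E^*)}(1)$.

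For step (ii) I would run the maximum-principle setup from the proof of Theorem \ref{thm2}. Let $t_0$ be the first time at which $\sqrt{-1}\p\b{\p}\log G$ acquires a null eigenvector $u$ at a point $(z_0,[v_0])$, so $i_u(\p\b{\p}\log G)=0$. Since $\omega_{FS}>0$ along the fibers, $u$ is horizontal, $u=u^z\frac{\delta}{\delta z}$; contracting $\sqrt{-1}\p\b{\p}\log G=-\Psi+\omega_{FS}$ with $u$ and using that $\omega_{FS}$ is a purely vertical form gives $i_u\Psi=0$, and since $\dim M=1$ this already means $\Psi_{z\b z}=0$ at $(z_0,[v_0],t_0)$, so $(-\Psi)(u,\o u)=0$. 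The only component of $R^g$ on a curve being $R^g_{z\b z z\b z}=gK$ (with $K$ the Gaussian curvature), the first term of (\ref{defnT}) equals $\frac{1}{g}K\,(-\Psi)(u,\o u)=0$. For the second term I would take the parallel extension $U$ of $u$ from the proof of Theorem \ref{thm2}; differentiating $\Psi(U,\o U)=\Psi_{z\b z}|U^z|^2$ in the fiber directions and using $\Psi_{z\b z}=0$ at the base point, the terms that hit $U$ drop out and $\p_a\big(\Psi(U,\o U)\big)=(\p_a\Psi_{z\b z})|u^z|^2$ there. But restricting $\p\b{\p}\log G\ge 0$ to horizontal vectors shows $-\Psi_{z\b z}\ge 0$ near $(z_0,[v_0],t_0)$, while $|U^z|^2>0$ there, so $(-\Psi)(U,\o U)\ge 0$ near the point and vanishes at it; hence $\p_a\big((-\Psi)(U,\o U)\big)=0$, i.e. $\p_a\Psi_{z\b z}\,u^z=0$, so that $|i_u\p^V\Psi|^2_\Omega=f^{a\b b}\p_a\Psi_{z\b z}\o{\p_b\Psi_{z\b z}}|u^z|^2g^{-1}$ vanishes as well. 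Therefore $(-\sqrt{-1})T(u,\o u)=0\ge 0$, the null eigenvector assumption holds, and Theorem \ref{thm2} yields $\sqrt{-1}\p\b{\p}\log G(t)\ge 0$ for as long as the flow exists, which is precisely the semipositivity of the curvature of $\mc O_{P(E^*)}(1)$.

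I expect the only delicate point to be the bookkeeping in the second term: one must make sure that in $\p_a\big(\Psi(U,\o U)\big)$ all contributions coming from the variation of $U$ (rather than of $\Psi_{z\b z}$) are killed by $i_u\Psi=0$, and that it is genuinely $(-\Psi)(U,\o U)$ — not merely $\p\b{\p}\log G(U,\o U)$ — that attains a local minimum at the base point, so that differentiating it is legitimate. Both are immediate here because $U$ is horizontal at the base point and $-\Psi_{z\b z}\ge 0$ nearby; beyond this, everything is just the one-dimensional specialization of the identity (\ref{3.24}), requiring no new estimate.
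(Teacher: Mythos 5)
Your proposal is correct and follows essentially the same route as the paper: both reduce the claim to the null eigenvector assumption for $T$, kill the curvature term via $i_u\Psi=0$ (hence $\Psi_{z\bar z}=0$ at the degeneracy point since $\dim M=1$), and kill $\left|i_u\p^V\Psi\right|^2_{\Omega}$ by observing that $(-\Psi)(U,\o{U})\geq 0$ nearby and vanishes at the point, so its fiber derivatives vanish there. The "delicate point" you flag is exactly what the paper disposes of with the remark that $(-\Psi)(U,\o{U})$ attains a local minimum at $(z_0,[v_0],t_0)$, and your handling of it is sound.
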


In particular, if $G_0=h_0^{i\b{j}}v_i\b{v}_j$ comes from a Hermitian metric $(h_0^{i\b{j}})$ of $E^*$ and 
\begin{align}
\Omega=p^*\omega+\omega_{FS}(G)	
\end{align}
for a fixed Hermitian metric $
\omega$, by Remark \ref{rem2} (1), (\ref{flow2}) is equivalent to the following Hermitian-Yang-Mills flow:
\begin{align}\label{HYM}
\begin{cases}
	&h^{-1}\cdot \frac{\p h}{\p t}+\Lambda R^h+(r-1)I=0\\
	&(h_{i\b{j}}(t))>0,\\
	&h_{i\b{j}}(0)=(h_0)_{i\b{j}}.
	\end{cases}
\end{align}
 By Proposition \ref{prop2} and Proposition \ref{prop3}, we have
\begin{cor}\label{cor3.2}
	If $M$ is a curve, then the Griffiths semipositivity  is preserved along the Hermitian-Yang-Mills flow (\ref{HYM}).
\end{cor}

\subsection{K\"ahler-Ricci flow}

In this section, we assume that $E=TM$. As the discussion in Remark \ref{rem2} (2), if we take
\begin{align*}
\omega(G)=\sqrt{-1}g_{\alpha\b{\beta}}dz^{\alpha}\wedge d\b{z}^{\beta},
\end{align*}
where $(g_{\alpha\b{\beta}})$ denotes the inverse of the matrix  $\left(\frac{\p^2 G}{\p v_{\alpha}\p\b{v}_{\beta}}\right)$. And $G_0=g_0^{\alpha\b{\beta}}v_\alpha\b{v}_\beta$ is a strongly pseudoconvex complex Finsler metric on $T^*M$ induced by the following K\"ahler metric 
\begin{align*}
\omega_0=\sqrt{-1}(g_0)_{\alpha\b{\beta}}dz^{\alpha}\wedge d\b{z}^{\beta}.
\end{align*}
By (\ref{KR}), the flow (\ref{flow2}) is equivalent to the following K\"ahler-Ricci flow
\begin{align}\label{KR1}
\begin{cases}
	& \frac{\p\omega}{\p t}+\text{Ric}(\omega)+(n-1)\omega=0,\\
	&\omega>0,\\
	&\omega(0)=\omega_0.
\end{cases}	
\end{align}
The solution of (\ref{flow2}) is induced from the K\"ahler metric $\omega=\sqrt{-1}g_{\alpha\b{\beta}}dz^{\alpha}\wedge d\b{z}^{\beta}$. In this case, 
\begin{align}
\begin{split}
\langle \sqrt{-1}R^g(u,\o{u}), -\Psi\rangle_{\Omega}&=(-\Psi)_{\alpha\b{\delta}}g^{\alpha\b{\beta}}g^{\gamma\b{\delta}}R^g_{\gamma\b{\beta}\sigma\b{\tau}}u^\sigma\b{u}^\tau\\	
&=\frac{1}{G}R^g_{\mu\b{\nu}\alpha\b{\delta}}g^{\mu\b{\sigma}}g^{\b{\nu}\tau}\o{v_{\sigma}}v_{\tau}g^{\alpha\b{\beta}}g^{\gamma\b{\delta}}R^g_{\gamma\b{\beta}\sigma\b{\tau}}u^\sigma\b{u}^\tau\\
&=\sum_{\alpha,\beta=1}^{\dim M}R^g(V,\o{V}, e_\alpha, \o{e_\beta})R^g(e_\beta,\o{e_\alpha},u,\o{u}),
\end{split}
\end{align}
where $V=\frac{1}{\sqrt{G}}g^{\alpha\b{\sigma}}\o{v_{\sigma}}\frac{\p}{\p z^{\alpha}}$ and $\{e_\alpha\}$ is a local orthonormal basis of $(TM, \omega)$. On the other hand, by (\ref{3.30}), one has at the point $(z_0,[v_0], t_0)$,
\begin{align}
\begin{split}
\left|i_u\p^V\Psi\right|^2_{\Omega}&=f^{a\b{b}}\p_a\Psi_{\alpha\b{\beta}}\o{\p_b\Psi_{\gamma\b{\tau}}}u^\alpha\o{u}^{\gamma}g^{\tau\b{\beta}}	\\
&=\frac{1}{G}(R^g)^{\mu\b{\sigma}}_{~~\alpha\b{\beta}}\o{v_{\sigma}}u^{\alpha} (R^{g})^{\delta\b{\nu}}_{~~\tau\b{\gamma}}v_{\delta}\o{u}^{\gamma}g_{\mu\b{\nu}}g^{\tau\b{\beta}}\\
&=\sum_{\alpha,\beta=1}^{\dim M}\left|R^g(V,\o{e_{\alpha}},u, \o{e_{\beta}})\right|^2.
\end{split}
\end{align}
Therefore, 
\begin{align}
	(-\sqrt{-1})T(u,\o{u})=\sum_{\alpha,\beta=1}^{\dim M}\left(R^g(V,\o{V}, e_\alpha, \o{e_\beta})R^g(e_\beta,\o{e_\alpha},u,\o{u})-\left|R^g(V,\o{e_{\alpha}},u, \o{e_{\beta}})\right|^2\right)\geq 0
\end{align}
by \cite[Page 254, Claim 2.2]{Bou}.
From Proposition \ref{prop2} and Theorem \ref{thm2}, we can reprove the following Mok's proposition, which is contained in \cite[Proposition 1.1]{Mok} (see also \cite[Theorem 5.2.10]{Bou}).
\begin{prop}[{\cite[Proposition 1.1]{Mok}}]
	If $(M, \omega_0)$ is a compact K\"ahler manifold with nonnegative holomorphic bisectional curvature, then the nonnegativity is preserved along the K\"ahler-Ricci flow (\ref{KR1}).
\end{prop}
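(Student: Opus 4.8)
The plan is to obtain the proposition as a direct consequence of Theorem \ref{thm2}, using Proposition \ref{prop2} to pass back and forth between Griffiths semipositivity of the Chern curvature of $(TM,\omega)$ and semipositivity of $\sqrt{-1}\partial\bar\partial\log G$ on $P(T^*M)$, once the flow (\ref{flow2}) has been identified with the K\"ahler--Ricci flow (\ref{KR1}) and the null eigenvector assumption for the horizontal $(1,1)$-form $T$ has been verified.

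First I would translate the hypothesis into the language of the projective bundle. By Definition \ref{defn2}, the assertion that $(M,\omega_0)$ has nonnegative holomorphic bisectional curvature means exactly that the Chern curvature of $(TM,\omega_0)$ is Griffiths semipositive. Setting $E=TM$, $E^*=T^*M$, and letting $G_0=g_0^{\alpha\bar\beta}v_\alpha\bar v_\beta$ be the strongly pseudoconvex complex Finsler metric on $T^*M$ induced by $\omega_0$, Proposition \ref{prop2} gives $\sqrt{-1}\partial\bar\partial\log G_0\geq 0$ on $P(T^*M)$, which is precisely the initial condition required by Theorem \ref{thm2}. Then I would run the flow: taking $\omega(G)=\sqrt{-1}g_{\alpha\bar\beta}dz^\alpha\wedge d\bar z^\beta$ with $(g_{\alpha\bar\beta})$ the inverse of $\big(\partial^2 G/\partial v_\alpha\partial\bar v_\beta\big)$, so that $\omega(G)=p^*\omega$ for a metric $\omega$ on $M$, the computation leading to (\ref{KR}) together with \cite[Theorem 7.1, Remark 7.2]{Liu2} identifies a solution $G(t)$ of (\ref{flow2}) with initial value $G_0$ with a solution $\omega(t)$ of the K\"ahler--Ricci flow (\ref{KR1}) with $\omega(0)=\omega_0$; moreover $\omega(t)$ stays K\"ahler and $G(t)$ stays of the induced form $g(t)^{\alpha\bar\beta}v_\alpha\bar v_\beta$ for as long as the solution exists, so the two flows live on the same time interval.

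It then remains to check the null eigenvector assumption for $T$, which is where the real content lies. By the identity (\ref{3.24}) and the displays following it, for a null eigenvector $u$ of $\sqrt{-1}\partial\bar\partial\log G$ one has
\begin{align*}
(-\sqrt{-1})T(u,\bar u)=\sum_{\alpha,\beta=1}^{\dim M}\left(R^g(V,\bar V,e_\alpha,\bar e_\beta)\,R^g(e_\beta,\bar e_\alpha,u,\bar u)-\left|R^g(V,\bar e_\alpha,u,\bar e_\beta)\right|^2\right),
\end{align*}
with $V=\tfrac{1}{\sqrt G}g^{\alpha\bar\sigma}\overline{v_\sigma}\tfrac{\partial}{\partial z^\alpha}$ and $\{e_\alpha\}$ a local unitary frame of $(TM,\omega)$. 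When $\sqrt{-1}\partial\bar\partial\log G(t)\geq 0$ holds on $P(T^*M)$ for $t\in[0,t_0]$, Proposition \ref{prop2} ensures that the Chern curvature $R^g$ of $\omega(t)$ has nonnegative holomorphic bisectional curvature on that interval, and then \cite[Page 254, Claim 2.2]{Bou} shows the right-hand side is $\geq 0$. Hence $T$ satisfies the null eigenvector assumption, Theorem \ref{thm2} applies, and $\sqrt{-1}\partial\bar\partial\log G(t)\geq 0$ for all $t$ for which the solution exists. Reading this through Proposition \ref{prop2} one final time, the Chern curvature of $(TM,\omega(t))$ is Griffiths semipositive, i.e.\ $\omega(t)$ has nonnegative holomorphic bisectional curvature, which is the claim.

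The bookkeeping in the first two steps is routine; the substance is contained in the maximum principle behind Theorem \ref{thm1}/\ref{thm2} and in the algebraic fact \cite[Claim 2.2]{Bou} that the cone of curvature operators with nonnegative holomorphic bisectional curvature is invariant under the relevant quadratic expression. The one delicate point is ensuring that at the first would-be failure time $t_0$ the curvature still has nonnegative holomorphic bisectional curvature, so that Bourguignon's claim is applicable — but this is exactly guaranteed by the standing assumption $\sqrt{-1}\partial\bar\partial\log G\geq 0$ on $[0,t_0)$ that is already built into the proof of Theorem \ref{thm1}.
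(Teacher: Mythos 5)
Your proposal is correct and follows essentially the same route as the paper: translate nonnegative bisectional curvature into $\sqrt{-1}\partial\bar\partial\log G_0\geq 0$ via Proposition \ref{prop2}, identify the flow (\ref{flow2}) with the K\"ahler--Ricci flow through (\ref{KR}) and \cite[Theorem 7.1]{Liu2}, rewrite $(-\sqrt{-1})T(u,\bar u)$ as $\sum_{\alpha,\beta}\bigl(R^g(V,\bar V,e_\alpha,\bar e_\beta)R^g(e_\beta,\bar e_\alpha,u,\bar u)-|R^g(V,\bar e_\alpha,u,\bar e_\beta)|^2\bigr)$ using the null eigenvector condition $i_u\Psi=0$, and invoke \cite[Page 254, Claim 2.2]{Bou} before applying Theorem \ref{thm2}. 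This matches the paper's argument in Section \ref{sec3} step for step, including the observation that the standing semipositivity on $[0,t_0)$ is what makes Bourguignon's claim applicable.
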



\begin{thebibliography}{99}

\bibitem{AP} M. Abate, G. Patrizio, Finsler Metrics- A Global Approach, LNM 1591, Springer-Verlag, Berlin Heidelberg, 1994.

\bibitem{Aikou} T. Aikou, {\it Finsler Geometry on complex vector bundles}, Riemann-Finsler Geometry, MSRI Pulblications {\bf 50}, (2004), 83-105.

\bibitem{Atiyah} M. Atiyah, R. Bott, {\it The Yang-Mills equations over Riemann surfaces}, Phil. Trans. Roy. Soc. London A {\bf 308} (1982), 524-615.

\bibitem{Bando} S. Bando, {\it On three-dimensional compact K\"ahler manifolds of nonnegative bisectional curvature}, J. Differential Geometry {\bf 19} (1984), 283-297.


\bibitem{Bou} S. Boucksom, P. Eyssidieux, V. Guedj, An Introduction to the K{\"a}hler-Ricci Flow, {\bf 2086} (2013), Springer.


\bibitem{Cao} H.-D. Cao, {\it Deformation of K\"ahler metrics to K\"ahler-Einstein metrics on compact K\"ahler manifolds}, Invent. Math. {\bf 81} (1985), No. 2, 359-372.

\bibitem{Cao-Wong} J. Cao, P.-M. Wong, {\it Finsler geometry of projectivized vector bundles}, J. Math. Kyoto Univ. {\bf 43} (2003), No.2, 396-410.

\bibitem{Chen} X. Chen, {\it On K\"{a}hler manifolds with positive orthogonal bisectional curvature}, Advance in Mathematics {\bf 215} (2007), 427-445.

\bibitem{Chen1} X. Chen, S. Sun, G. Tian, {\it A note on K\"ahler-Ricci soliton,} Int. Math. Res. Not. IMRN 2009, no. {\bf 17}, 3328-3336.

\bibitem{Chow} B. Chow, D. Knopf, The Ricci flow: an introduction, Mathematical
Surveys and Monographs, vol. {\bf 110}, American Mathematical Society, Providence, RI, 2004.



\bibitem{Don1}  S. K. Donaldson, {\it Anti-self-dual Yang-Mills connections over complex algebraic surfaces and stable vector bundles}, Proc. London Math. Soc. {\bf 50} (1985), 1-26.


\bibitem{FLW} H. Feng, K. Liu, X. Wan, {\it Chern forms of holomorphic Finsler vector bundles and some applications}, Inter. J. Math. {\bf 27} (2016), No. 4, 1650030.

\bibitem{FLW2} H. Feng, K. Liu, X. Wan, {\it A Donaldson type functional on a holomorphic Finsler vector bundle}, Math. Ann. {\bf 369} (2017), no. 3-4, 997-1019. 

\bibitem{FLW1} H. Feng, K. Liu, X. Wan, {\it Geodesic-Einstein metrics and nonlinear stabilities},  Trans. Amer. Math. Soc. {\bf 371} (2019), no. 11, 8029-8049.

\bibitem{Gill} M. Gill, {\it Convergence of the parabolic complex Monge-Amp\`ere equation on compact Hermitian manifolds,} Comm. Anal. Geom. {\bf 19} (2011), 277-303.

\bibitem{Gri} P. Griffiths, {\it Hermitian differential geometry, Chern classes and positive vector bundles,} Global Analysis, papers in honor of K. Kodaira, Princeton Univ. Press, Princeton (1969), 181-251.

\bibitem{Gu} H. Gu, {\it A new proof of Mok's generalized Frankel conjecture theorem}, Proc. Amer. Math. Soc. {\bf 137} (2009), no.  3, 1063-1068.

\bibitem{Gu1} H. Gu, Z. Zhang, {\it An extension of Mok's theorem on the generalized Frankel conjecture}, Science China Mathematics {\bf 53} (2010), no. 5, 1253-1264.

\bibitem{Har} R. Hartshorne, {\it Ample vector bundles}, Inst. Hautes Etudes, Sci. Publ. Math. No. {\bf 29} (1966), 63-94. 

\bibitem{Ko1} S. Kobayashi, {\it Negative vector bundles and complex Finsler structures}, Nagoya Math. J. Vol. {\bf 57} (1975), 153-166.

\bibitem{Ko3} S. Kobayashi, Differential Geometry of Complex Vector Bundles, Iwanami-Princeton Univ. Press, 1987.

 \bibitem{Liu2} K. Liu, X. Yang, {\it Geometry of Hermitian manifolds}, International Journal of Mathematics {\bf 23} (2012), No. 6, 1250055 1-40. 

 \bibitem{Liu1} K. Liu, X. Sun, X. Yang, {\it Positivity and vanishing theorems for ample vector bundles},  J. Algebraic Geom. {\bf 22} (2013), No. 2, 303-331.
 
 \bibitem{Mok} N. Mok, {\it The uniformization theorem for compact K\"ahler manifolds of nonnegative holomorphic bisectional curvature}, J. Differ. Geom. {\bf 27} (1988), No. 2, 179-214.
 
 \bibitem{Mori} S. Mori, {\it Projective manifolds with ample tangent bundles}, Ann. of Math. (2) {\bf 110} (1979), no. 3, 593-606.
 
  \bibitem{Munteanu} G. Munteanu, Complex Spaces in Finsler, Lagrange and Hamilton Geometries, Kluwer Academic Publishers, 2004.
 

  
 \bibitem{Niu} Y. Niu, {\it Maximum principles for real $(p, p)$-forms on K\"ahler manifolds}, Geom Dedicata  {\bf 149} (2010), 363-371, DOI 10.1007/s10711-010-9486-7.
 
 \bibitem{Phong} D. H. Phong, J. Song, J. Sturm, B. Weinkove,
 {\it The K\"ahler-Ricci flow with positive bisectional curvature}, Invent. Math. {\bf 173} (2008), no. 3, 651-665.
 
 \bibitem{Siu} Y.-T. Siu, S.-T. Yau, {\it Compact K\"{a}hler manifolds of positive bisectional curvature}, Invent. Math. {\bf 59} (1980), 189-204.
 
 \bibitem{Streets} J. Streets, G. Tian, {\it Hermitian curvature flow}, Journal of the European Mathematical Society, {\bf 13} (2011),  601-634.
 
 \bibitem{Streets1} J. Streets, G. Tian, {\it A parabolic flow of pluriclosed metrics}, Int. Math. Res. Not. IMRN 2010, no. {\bf 16}, 3101-3133.
 
 \bibitem{Streets2} J. Streets, G. Tian, {\it Regularity results for pluriclosed flow}, Geom. Topol. {\bf 17} (2013), no. 4, 2389-2429.
 
 \bibitem{To1} V. Tosatti, B. Weinkove, {\it On the evolution of a Hermitian metric by its Chern-Ricci form}, J Differential Geom. {\bf 99} (2015), 125-163.
 
 \bibitem{To2}  V. Tosatti, B. Weinkove, X. Yang, {\it Collapsing of the Chern-Ricci flow on elliptic surfaces}, Math Ann, {\bf 362} (2015), 1223-1271.
 
 \bibitem{Yury} Y. Ustinovskiy, {\it The Hermitian curvature flow on manifolds with non-negative Griffiths curvature}, arXiv: 1604.04813v2, 2016.

 
 \bibitem{Wan} X. Wan, {\it Holomorphic sectional curvature of complex Finsler manifolds}, The Journal of Geometric Analysis,  J. Geom. Anal. {\bf 29} (2019), no. 1, 194-216.
 
 \bibitem{Wan1} X. Wan, G. Zhang, {\it The asymptotic of curvature of direct image bundle associated with higher powers of a relative ample line bundles}, arXiv: 1712.05922v1, 2017. 
 
 \bibitem{Yang} X. Yang, {\it The Chern-Ricci flow and holomorphic bisectional curvature,}. Sci. China Math. {\bf 59} (2016), no. 11, 2199-2204.
 
 

\end{thebibliography}
\end{document}